\newtheoremstyle{exercise} 
  {3pt} 
  {3pt} 
  {\scriptsize\rmfamily} 
  {
\parindent} 
  {\rmfamily\scshape} 
  {.} 
  {.5em} 
  {} 
\newtheoremstyle{newplain}
  {5pt}
  {5pt}
  {\itshape}
  {}
  {\rmfamily\scshape}
  {. ---}
  {.5em}
  {}
\newtheoremstyle{newremark}
  {5pt}
  {5pt}
  {\rmfamily}
  {}
  {\rmfamily\scshape}
  {. ---}
  {.5em}
  {}
\theoremstyle{newplain}
\newtheorem*{Theorem*}{Theorem} 
\newtheorem*{thmglobal}{Theorem \ref{globalregularity}}
\theoremstyle{newplain}
\newtheorem{Theorem}{Theorem}
\newtheorem{Lemma}[Theorem]{Lemma}
\newtheorem{Corollary}[Theorem]{Corollary}
\newtheorem{Proposition}[Theorem]{Proposition}
\newtheorem{Definition}[Theorem]{Definition}
\theoremstyle{newremark}
\newtheorem{Remark}[Theorem]{Remark}
\theoremstyle{exercise}
\numberwithin{Theorem}{section}
\numberwithin{Exercise}{section}
\def\e{\varepsilon}
\def\F1{{\mathscr F}}
\def\S{\Sigma}
\def\pf{\begin{proof}[{\bf Proof:}]\thst}
\def\thst{\mbox{}\newline}
\newcommand{\N}{\mathbb{N}} 
\newcommand{\R}{\mathbb{R}} 
\newcommand{\Rn}{\R^n}
\newcommand{\Hm}[1]{\mathscr{H}^{#1}} 
\newcommand{\cl}[1]{\mathcal{#1}}
\newcommand{\calA}{\mathscr{A}}
\newcommand{\calG}{\mathscr{G}}
\newcommand{\calN}{\mathscr{N}}
\newcommand{\calS}{\mathscr{S}}
\newcommand{\calU}{\mathscr{U}}
\def\XXint#1#2#3{{%
\setbox0=\hbox{$#1{#2#3}{\int}$}
\vcenter{\hbox{$#2#3$}}\kern-.5\wd0}}
\newcommand{\la}{\langle}
\newcommand{\ra}{\rangle}
\renewcommand{\leq}{\leqslant}
\renewcommand{\geq}{\geqslant}
\begin{document}

\title[Neumann free boundary singularities]{On the singular set of mean curvature flows with Neumann free boundary conditions}
\author{Amos N. Koeller}
\email{akoeller@everest.mathematik.uni-tuebingen.de}
\date{\today}

\address{Mathematisches Institut der Universit\"at T\"ubingen \\ 
Auf der Morgenstelle 10 \\
72076 T\"ubingen \\
Germany \\}


\begin{abstract}
We consider $n$-dimensional hypersurfaces flowing by mean curvature flow with Neumann free boundary conditions supported on a smooth support surface. We show that the Hausdorff $n$-measure of the singular set is zero. In fact, we consider two types of interaction between the support and flowing surfaces. In the case of weaker interaction, we need make no further assumptions than in the case without boundary to achieve our result. In the case of stronger interaction, we need only make the additional assumption that $H_{\Sigma}>0$, that is, that the support surface be mean convex. We go on, in this case, to show that the result is not, in general, true without the mean convexity assumption.
\end{abstract}

\maketitle

\section{Introduction}

A time dependent family of surfaces, $\cl{M}=(M_t)$, is said to be moving by its mean curvature if for each time, $t$, and each point, $x\in M_t$, $x$ is moving at a velocity equal to its mean curvature along the unit normal at that point. The mean curvature flow has been extensively studied, both in the classical form, see, for e.g., Ecker \cite{ecker1} and Huisken \cite{huisken}, and in the weak form, the so called Brakke flow, see Brakke \cite{brakke}.

Of particular interest in both cases is the singular set, $sing_T\cl{M}$. That is, the set of points, reached by the flow, where the flow is no longer appropriately defined or collapses upon itself. It has been shown in the case of an $n$-dimensional surface flowing without boundary in $\R^{n+1}$, see Brakke \cite{brakke} or Ecker \cite{ecker1}, that the Hausdorff $n$-measure of the singular set is zero:
\begin{equation}\label{base}
\Hm{n}(sing_T\cl{M})=0. 
\end{equation}
Also of interest has been the study of mean curvature flow with boundary conditions. In particular, mean curvature flow with Neumann free boundary conditions, see, for e.g., Buckland \cite{buckland} and Stahl \cite{stahl1} and \cite{stahl}. Neumann free boundary conditions prescribe a fixed support surface, $\Sigma$, along which the surface is allowed to flow provided that the flowing and support surfaces always meet orthogonally. 

In this paper we consider the singular set of mean curvature flows with Neumann free boundary conditions with two differing interpretations of the nature of the support surface. In each case we give equivalent results to ($\ref{base}$). 

Firstly, we interpret the support surface as solid. That is, any intersection of the flowing surface and the support surface other than on the boundary of the flowing surface is treated as a singular point, having `collided' with the support surface.

Secondly, we interpret the support surface as traversable. That is, that the support surface should be only thought of as guidelines for the movement of the boundary of the flowing surface, but not actually present itself.

Initial results for the solid boundary case were given in Koeller \cite{koeller4}. The results were, however, dependent on  several assumptions. 

In this paper we remove the unwanted assumptions. In the case of traversable boundary, we show that we need only make the same assumptions as those needed for the case of mean curvature flow without boundary. In the solid boundary case, we make the additional assumption that the support surface is mean convex (actually something slightly more general). We also show however, that without this assumption, the equivalent to ($\ref{base}$) will not, in general, be true.

In summary, our main result states in simple terms:

{\it For any mean curvature flow with Neumann free boundary conditions, $\cl{M}$, supported on a mean convex support surface in the case that the surface is solid, $(\ref{base})$ holds.}

This result is stated formally in Section 3, as Theorem $\ref{globalregularity}$, after all the necessary terms have been properly defined.

The structure of this paper is as follows. In Section 2 we define the objects to be considered; the flow and the singular set. In Section 3, we present the assumptions made; the area continuity and unit density hypothesis, used also in the case without boundary, and the mean convexity of the support surface. It is also in Section 3 that the main theorem is stated.

In Section 4, we outline the strategy of the proof. In presenting the strategy we also introduce several supporting results that will be applied in this work. Section 4 also shows that almost all points are well behaved in a sense that is there defined.

In section 5 we give local curvature estimates in neighbourhoods around the well behaved points in the boundary of the limiting surface of the flow. These estimates are the technical key to our results.

In section 6 we use the local curvature estimates firstly to provide local regularity results around the well behaved points. That is, that ($\ref{base}$) holds in an appropriate form in small neighbourhoods of the well behaved points. With covering arguments, we then deduce the proof of the main theorem.

Finally, in Section 7, we discuss the necessity of the mean convexity of the support surface and show how to find the counter examples.

\section{Definitions and aims}
We begin by providing a formal definition of the problem being observed, namely mean curvature flows with Neumann free boundary conditions. We first define the support surface for the boundary and then how a surface is understood to flow on this support surface.

\begin{Definition}\label{rollingball}
Let $S\subset \R^{n+1}$ be a $C^1$-hypersurface and $\nu(x)$ be a choice of unit normal for each $x\in S$. $S$ is said to satisfy the rolling ball condition of radius $r>0$ if $B_r(x\pm r\nu(x))\cap S = \{x\}$ for each $x\in S.$
\end{Definition} 

\begin{Definition}  (Free boundary support surface) \label{freebdrysupport}
Let $G$ be a simply connected $C^3$-$(n+1)$-dimensional subset of $\R^{n+1}$. Let $\Sigma := \partial G$ satisfy the 
rolling ball condition for balls of radius $1/\kappa_{\Sigma}$ and satisfy the condition on the second fundamental 
form, $A_{\Sigma}$, of $\Sigma$
$$\parallel A_{\Sigma}\parallel^2 + \parallel\nabla A_{\Sigma}\parallel \leq \kappa_{\Sigma}^2 < \infty.$$
$\Sigma$ is then said to be a  Neumann free boundary support surface. 
\end{Definition} 
\begin{Remark}
$\Sigma$ will always denote the support surface of the flows being observed.
\end{Remark}
We now define the flows being considered in this work. The difference between the two being the role that the support $\Sigma$ takes. We start with the initial surface.
\begin{Definition}(Initial surface)\label{initialsurface}
Let $M^n$ denote a smooth orientable $n$-dimensional manifold with smooth, compact boundary, $\partial M^n$, and set 
$M_0:=F_0(M^n),$
where $F_0$ is a smooth embedding satisfying 
\begin{eqnarray}
\partial M_0 := F_0(\partial M^n) & \subset & M_0 \cap \Sigma \hbox{ and} \nonumber \\
\langle \nu_0,\nu_{\Sigma}\rangle (F_0(p))& = &0  \ \ \hbox{  } \ \ \hbox{ for all } p\in \partial M^n, \label{m0eqns}
\end{eqnarray}
for smooth unit normal fields $\nu_0$ to $M_0$ and $\nu_{\Sigma}$ to $\Sigma$. For $\nu_{\Sigma}$ we take the inner unit normal vector field to $G$.
\end{Definition} 

\begin{Definition}{ (Mean curvature flow with Neumann free boundary conditions)}\label{MCFwNfBC} 
Let $\Sigma$ be a Neumann free boundary support surface. 
Let $T\in (0,\infty)$, $I :=[0,T)$ be an interval, and $F(\cdot,t)~:~M^n\rightarrow\R^{n+1}$ be a one-parameter family of smooth 
embeddings for all $t \in I$. The family of hypersurfaces $\cl{M}:=(M_t)_{t\in I}$, where $M_t = F_t(M^n)$, are said to be 
evolving by { mean curvature with Neumann free boundary conditions on the solid support surface $\Sigma$} if 
\begin{equation}
\begin{array}{rcll}
\frac{\partial F}{\partial t}(p,t) & = & \vec{H}(p,t) & \hbox{ for all }(p,t) \in M^n \times I, \\
F(\cdot,0) & = & F_0, &   \\
\partial M_t := F(\partial M^n,t)& =& M_t\cap \Sigma & \hbox{ for all } t\in I,   \\
\langle \nu,\nu_{\Sigma} \rangle (F(p),t)&  = & 0 & \hbox{ for all }(p,t) \in \partial M^n \times I, \hbox{ and} \\
M_t & \subset & \overline{G} & \hbox{ for all } t\in I. 
\end{array}
\label{MCFwNfBCeqns}
\end{equation}
$\cl{M}:=(M_t)_{t\in I}$ are said to be evolving by { mean curvature with Neumann free boundary conditions on the traversable support surface $\Sigma$} if 
\begin{equation}
\begin{array}{rcll}
\frac{\partial F}{\partial t}(p,t) & = & \vec{H}(p,t) & \hbox{ for all }(p,t) \in M^n \times I, \\
F(\cdot,0) & = & F_0, &   \\
\partial M_t := F(\partial M^n,t)& \subset & M_t\cap \Sigma & \hbox{ for all } t\in I,   \hbox{ and}\\
\langle \nu,\nu_{\Sigma} \rangle (F(p),t)&  = & 0 & \hbox{ for all }(p,t) \in \partial M^n \times I,  
\end{array}
\label{ghosteqns}
\end{equation}
Here $\vec{H}(p,t)=-H(p,t)\nu(p,t)$ denotes the mean curvature vector of the immersions $M_t$ at $F(p,t)$, for a choice 
of unit normal $\nu$ for $M_t$.
\end{Definition} 
\begin{Remark} 
\begin{enumerate}[(1)]
\item For convenience of reference we will in general simply say that $\cl{M}$ is a $MCF(N,S)$ if it is a solution of ($\ref{MCFwNfBCeqns}$) and a $MCF(N,T)$ if it is a solution of ($\ref{ghosteqns}$). We will say that $\cl{M}$ is a $MCF(N)$ when whether $\cl{M}$ is a $MCF(N,T)$ or a $MCF(N,S)$ is not important to the discussion.
\item The solid support case is the case where we give a physical interpretation to $\Sigma$, making $\Sigma$ solid. In this case the flowing surface may not pass through $\Sigma$, but rather will collide with $\Sigma$ and cause the flow to cease.

The traversable support case is the case where the support surface is not to be thought of as a physical object, but rather simply a prescription of where the boundary, $(\partial M_t)_{t\in I}$, should flow. In this case the flowing surface may traverse $\Sigma$ without any consequences or special treatment.

\item We will, in general, suppress the notation referring to the embedding map, using rather only the 
position vector, $x \in \R^{n+1}$, instead of $F(p,t)$. With this understanding, we may re-express the above equations 
governing a $MCF(N,T)$ by
\begin{equation} \nonumber
\begin{array}{rcll}
\frac{\partial x}{\partial t} & = & \vec{H}(x) & \hbox{ for all } x\in M_t, \\
\partial M_t & \subset & \Sigma \cap M_t & \hbox{ for all } t\in I, \hbox{ and}\\
\langle \nu,\nu_{\Sigma}\rangle (x)& = & 0 & \hbox{ for all } x\in \partial M_t.
\end{array}
\end{equation}
We may also re-express the equations governing a $MCF(N,S)$ analogously.

\item That such flows exist, that is, that there are solutions to the system of equations ($\ref{ghosteqns}$), was proven by Stahl in \cite{stahl1} and \cite{stahl}. As solutions to the system of equations ($\ref{MCFwNfBCeqns}$) are a special case of ($\ref{ghosteqns}$), their existence also follows from Stahl's work. It follows that there is a maximal time, $T\in (0,\infty]$, for which there is a solution of ($\ref{MCFwNfBCeqns}$) or ($\ref{ghosteqns}$) over $I=[0,T)$. For the remainder of this work, when referring to a $MCF(N)$, that is, a solution $\cl{M}=(M_t)_{t\in [0,T)}$ of ($\ref{MCFwNfBCeqns}$) or ($\ref{ghosteqns}$), we will always use $T$ to refer to this maximal time. $T$ is also called the first singular time, as for all $t<T$, the surface $M_t$ can continue to flow and is therefore not singular. If $T=\infty$ the flow can always continue and there are therefore no singularities. We are therefore interested in the limit surface $M_T$ for $T<\infty$. We now work towards a formal definition of the singular set.
\end{enumerate}
\end{Remark}

\begin{Definition} \label{klausreach} 
Let $(M_t)$ be a one-parameter family of sets in $\R^{n+1}$. We say that the family (or flow, in the case that $(M_t)$ is a 
flow) reaches $x_0 \in \R^{n+1}$ at time $t_0$ if there exists a sequence $(x_j,t_j)$ with $t_j \nearrow t_0$ so that 
$x_j \in M_{t_j}$ and $x_j \rightarrow x_0$. We write $\cl{M} \rightarrow_{t_0} x_0$ to denote that $\cl{M}=(M_t)$ reaches $x_0$ at time $t_0$. 

For $\cl{M}=(M_t)_{t\in[0,T)}$, a mean curvature flow, we define the limit surface of $\cl{M}$, $M_T \subset \R^{n+1}$, by
$$M_T:=\{x\in \R^{n+1}:\cl{M} \rightarrow_{T} x\}.$$ For the boundary we first define 
$\partial \cl{M}:= (\partial M_t)_{t\in [0,T)}$
and analogously define the limit boundary $\partial M_T \subset \S$ of $\partial \cl{M}$ by
$$\partial M_T := \{x\in \S: \partial \cl{M} \rightarrow_{T} x_0\}.$$
\end{Definition} 
\begin{Remark} Due to the possible misunderstanding that the notation $\cl{M} \rightarrow_{t_0} x_0$ implies that $\cl{M}$ degenerates to 
the point $x_0$ we point out that this is not at all implied. $\cl{M} \rightarrow_{t_0} x_0$ simply denotes that $x_0$ is 
one of, in general, many points that are reached by the flow at time $t_0$. 

\end{Remark}

Using limit surfaces we are now also able to give a precise definition of the singular set.
\begin{Definition}\label{singregpoints}
Let $\cl{M}=(M_t)_{t\in [0,T)}$ be a $MCF(N,S)$. We say that $x_0\in \R^{n+1}$ is a regular point for $\cl{M}$ at time $t_0\in (0,T]$ if one of the following three conditions is satisfied:
\begin{enumerate}[(i)]
\item $x_0\not\in M_{t_0}$,
\item There exists a $\rho>0$ such that $B_{\rho}(x_0)\cap M_{t_0}$ is a smooth orientable properly embedded $n$-dimensional manifold and $B_{\rho}(x_0)\cap \Sigma =\emptyset$, or
\item There exists a $\rho>0$ such that $B_{\rho}(x_0) \cap M_{t_0}$ is a smooth orientable properly embedded $n$-dimensional manifold and that $\partial M_{t_0}\cap B_{\rho}(x_0)$ is a smooth $(n-1)$-dimensional manifold satisfying 
$$x_0 \in \partial M_{t_0} \cap B_{\rho}(x_0) = M_{t_0} \cap B_{\rho}(x_0) \cap \Sigma$$
and
$$\la\nu_{M_{t_0}}(x),\nu_{\Sigma}(x)\ra=0$$
for all $x\in \partial M_{t_0} \cap B_{\rho}(x_0)$, where $\nu_{M_{t_0}}$ is a choice of unit normal field for $M_{t_0}$.
\end{enumerate}
Let $\cl{M}=(M_t)_{t\in [0,T)}$ be a $MCF(N,T)$. We say that $x_0\in \R^{n+1}$ is a regular point for $\cl{M}$ at time $t_0\in (0,T]$ if one of the following three conditions is satisfied:
\begin{enumerate}[(i)]
\item $x_0\not\in M_{t_0}$,
\item There exists a $\rho>0$ such that $B_{\rho}(x_0)\cap M_{t_0}$ is a smooth orientable properly embedded $n$-dimensional manifold, or
\item There exists a $\rho>0$ such that $B_{\rho}(x_0) \cap M_{t_0}$ is a smooth orientable properly embedded $n$-dimensional manifold and that $\partial M_{t_0}\cap B_{\rho}(x_0)$ is a smooth $(n-1)$-dimensional manifold satisfying 
$$x_0 \in \partial M_{t_0} \cap B_{\rho}(x_0) \subset M_{t_0} \cap B_{\rho}(x_0) \cap \Sigma$$
and
$$\la\nu_{M_{t_0}}(x),\nu_{\Sigma}(x)\ra=0$$
for all $x\in \partial M_{t_0} \cap B_{\rho}(x_0)$, where $\nu_{M_{t_0}}$ is a choice of unit normal field for $M_{t_0}$.
\end{enumerate}

The set of all regular points of a $MCF(N,S)$, $\cl{M}$, at time $T$ is called the regular set of $\cl{M}$ which we denote by $reg_{T}^S\cl{M}$. If $x_0$ is not a regular point of $\cl{M}$ at time $T$ we say that it is a singular point of $\cl{M}$ at time $T$. The set of all singular points of $\cl{M}$ at time $T$ is called the singular set of $\cl{M}$ at time $T$ which we denote by $sing_{T}^S\cl{M}$. In the case that $\cl{M}$ is a $MCF(N,T)$ we replace the superscript $S$ with the superscript $T$. In the case that the nature of the flow is clear, or that $reg_T^S\cl{M}=reg_T^T\cl{M}$ respectively $sing_T^S\cl{M}=sing_T^T\cl{M}$, we omit the superscript.
\end{Definition}
It is within the above setting that we wish to prove a result in the form of $(\ref{base})$. That is,
\begin{equation}\label{mainresulteqn}
\Hm{n}(sing_T^J\cl{M})=0 \hbox{ for each } J\in \{S,T\}.
\end{equation} 
As noted in Koeller \cite{koeller4}, the above equation is, in the full generality just introduced, not true, at least for solutions of ($\ref{MCFwNfBCeqns}$), and we need therefore introduce assumptions.

\section{The assumptions and the main theorem}

That $(\ref{mainresulteqn})$ is not true in the general case for solutions of ($\ref{MCFwNfBCeqns}$) was shown in Koeller \cite{koeller4} by way of a counter example. (See Construction 3.1 in \cite{koeller4}.) Assumptions or conditions are therefore, unfortunately a necessity in this case. We keep our assumptions to a minimum, however, and make only two; one very natural and necessary assumption and the other of fundamental importance to the proof that is also made in the analysis of mean curvature flow without boundary.

In the case of traversable boundaries, that is of solutions to ($\ref{ghosteqns}$), it is, as with flows without boundary, not clear that ($\ref{mainresulteqn}$) is not in general true. However, just as in the case of solutions to ($\ref{MCFwNfBCeqns}$), our proof of this case is dependent on the results on mean curvature flows without boundary, and therefore on the technical assumption made there.

The first assumption in the solid boundary case is a response to the counter example mentioned above. In the counter example, the problematic part of the singular set arises from the interior of the flow reaching a part of the support surface which acts as an obstacle to the flow. We therefore first make an assumption to remove the possibility of obstacles arising in the support surface. In particular, we make the following assumption.
\begin{Definition}\label{b2b}
A $MCF(N,S)$, $\cl{M}=(M_t)_{t\in I}$, is said to satisfy the boundary approaches boundary assumption if 
\begin{equation}\label{b2beqn}
M_T\cap \Sigma = \partial M_T.
\end{equation}
That is, if $M_T\cap \Sigma=\{x\in \R^{n+1}:\partial \cl{M} \rightarrow_T x\}.$
\end{Definition}
As the boundary approaches boundary assumption is an unusual one, we immediately note the following important result.
\begin{Proposition}\label{convexb2b}
Suppose $\cl{M} = (M_t)_{t\in I}$ is a $MCF(N,S)$ supported on the support surface $\Sigma$ for which the condition 
\begin{equation}\label{assump1}
H_{\Sigma}(x)>0
\end{equation}
 is satisfied for each $x\in \Sigma$. Then, for any $t_0\leq T$, $\cl{M}_t\rightarrow_{t_0}x_0$ implies that $\partial \cl{M} \rightarrow_{t_0}x_0$.
\end{Proposition}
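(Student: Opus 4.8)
The plan is to argue by contradiction, using the mean convexity of $\Sigma$ to show that the flowing surface cannot reach a point of $\Sigma$ except along its boundary. Suppose $\cl{M} \rightarrow_{t_0} x_0$ but $\partial\cl{M} \not\rightarrow_{t_0} x_0$, with $x_0 \in \Sigma$ (if $x_0 \notin \Sigma$ there is nothing to prove, since $M_t \subset \overline{G}$ forces any limit point near $\Sigma$ to be on $\Sigma$, and points in the open interior of $G$ reached by the flow are not of concern here). Then there is a sequence $(x_j, t_j)$ with $t_j \nearrow t_0$, $x_j \in M_{t_j}$, $x_j \to x_0$, while for some $\rho > 0$ the boundary $\partial M_{t_j}$ stays outside $B_\rho(x_0)$ for $t_j$ close to $t_0$. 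So near $x_0$ the pieces $M_{t_j} \cap B_\rho(x_0)$ are interior pieces of the flow, contained in $\overline{G}$ and approaching the point $x_0 \in \partial G$.

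The key step is a barrier/avoidance argument: since $H_\Sigma > 0$ on the compact set $\Sigma$ (or at least locally near $x_0$), one can foliate a one-sided neighbourhood of $x_0$ inside $\overline{G}$ by hypersurfaces that move inward (into $G$) under mean curvature flow faster than any contact could be maintained — concretely, using the rolling ball condition of radius $1/\kappa_\Sigma$, the spheres $\partial B_{1/\kappa_\Sigma}(x + \tfrac{1}{\kappa_\Sigma}\nu_\Sigma(x))$ for $x$ near $x_0$ are interior-tangent to $\Sigma$ and, being round spheres of bounded radius, shrink under mean curvature flow. The avoidance principle for mean curvature flow (valid here because near $x_0$ the relevant part of $M_t$ has no boundary, so no boundary term enters the maximum principle) then says that if $M_{t'}$ is disjoint from such a shrinking sphere at some time $t' < t_0$, it remains disjoint, and in fact is pushed a definite distance away from the tangency point. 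Choosing the sphere tangent to $\Sigma$ at a point near $x_0$ and using that $M_{t'}$ for $t'$ slightly less than $t_0$ is already trapped in a thin region against $\Sigma$ near $x_0$ (which it is, since $x_j \to x_0 \in \Sigma$ and $M_{t_j} \subset \overline G$), one derives that $M_t$ must have separated from that portion of $\Sigma$ — contradicting $x_j \to x_0$. The boundary condition $\langle \nu, \nu_\Sigma\rangle = 0$ is used to guarantee that the genuine boundary $\partial M_t$, which does touch $\Sigma$, meets it orthogonally and hence does not itself obstruct sliding the barrier sphere into position away from $\partial M_{t_j}$.

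I expect the main obstacle to be making the avoidance argument rigorous at the first singular time $t_0$ and uniformly in $j$: one needs that the comparison spheres can be chosen with radius bounded below (provided by $1/\kappa_\Sigma$) and that the time interval over which they shrink before vanishing is long enough to register the contradiction, which requires quantitative control — essentially, the distance from $M_{t'}$ to the barrier sphere must be estimated from below at a time $t' < t_0$ depending only on $\kappa_\Sigma$ and the geometry, not on the (a priori degenerating) flow near $x_0$. A secondary subtlety is handling the interaction with the genuine boundary: one must ensure the chosen barrier sphere, tangent to $\Sigma$ at a nearby point $y \neq x_0$, can be positioned so that it does not spuriously intersect $\partial M_{t'}$; here the orthogonality condition and the fact that $\partial\cl{M}$ does not reach $x_0$ (so $\partial M_{t'}$ stays a fixed distance $\rho$ away) should suffice, but the bookkeeping needs care. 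Modulo these quantitative estimates, the statement follows from the standard maximum principle comparison between the flow and the shrinking spheres provided by the rolling ball condition.
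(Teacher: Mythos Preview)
The paper does not prove this proposition here; it refers to Proposition~3.2 of \cite{koeller4}. So the comparison is necessarily with the expected barrier/comparison argument rather than with text in the present paper.

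Your overall plan---argue by contradiction, isolate a neighbourhood $B_\rho(x_0)$ in which $\partial M_t$ is absent so that $M_t\cap B_\rho(x_0)$ is a boundaryless mean curvature flow inside $\overline G$, and then apply a comparison principle---is exactly the right shape. The gap is in the choice of barrier. The interior rolling-ball spheres $\partial B_{1/\kappa_\Sigma}(x+\tfrac{1}{\kappa_\Sigma}\nu_\Sigma(x))$ do not do the job, for three reasons. First, they lie in $\overline G$, where $M_t$ also lies, so there is no reason $M_{t'}$ should be disjoint from them; the hypothesis of the avoidance principle is never verified. Second, even granting disjointness, such a sphere shrinks towards its centre inside $G$, i.e.\ it moves \emph{away} from its tangency point with $\Sigma$; staying disjoint from it therefore places no obstruction on $M_t$ approaching $\Sigma$. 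Third---and this is the decisive point---the rolling-ball condition is two-sided and holds irrespective of the sign of $H_\Sigma$; your argument as written never actually invokes $H_\Sigma>0$, so it cannot be correct, since Section~7 shows the conclusion fails in general once $H_\Sigma\le 0$ somewhere.

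What $H_\Sigma>0$ really buys is that the mean curvature vector of $\Sigma$ points \emph{into} $G$, so that $\Sigma$ itself (or a nearby parallel hypersurface $\Sigma_\epsilon=\{d_\Sigma=\epsilon\}\subset G$) evolved by mean curvature flow moves strictly into $G$. This is the barrier you want: at a time $t_1<t_0$ close to $t_0$, the boundaryless piece $M_{t_1}\cap B_\rho(x_0)$ lies strictly inside $G$, hence on the far side of $\Sigma_\epsilon$ for some small $\epsilon>0$; the local avoidance principle then keeps $M_t\cap B_\rho(x_0)$ on the far side of the evolving $\Sigma_\epsilon$, which only moves further into $G$, so $M_t$ stays at distance at least $\epsilon$ from $\Sigma$ near $x_0$---contradicting $x_j\to x_0$. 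Equivalently, one checks that near $\Sigma$ the signed distance satisfies $(\partial_t-\Delta_{M_t})\,d_\Sigma\ge c>0$ with $c$ coming from the lower bound on $H_\Sigma$, and runs the maximum principle. Either way, the rolling-ball radius $1/\kappa_\Sigma$ is not the mechanism; the strict positivity of $H_\Sigma$ is. Once you replace your sphere barriers by $\Sigma$ (or its parallels) moving under mean curvature flow, the rest of your outline---including the remark that the absence of $\partial M_t$ in $B_\rho(x_0)$ is what makes the comparison legitimate---goes through.
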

\begin{Remark}
\begin{enumerate}[(1)]
\item A proof of Proposition $\ref{convexb2b}$ can be found in \cite{koeller4}, Proposition 3.2.
\item Since, as shown in Section 7, the set of support surfaces not satisfying $(\ref{assump1})$ for which $(\ref{mainresulteqn})$ is not true is dense (with a type of $C^2$-metric) in the set of support surfaces not satisfying $(\ref{assump1})$, this is an essentially necessary assumption. 

\item Convexity and mean convexity assumptions are also natural ones, leaving a still large and interesting class of flows, that have been used in the literature by, for example, Stone \cite{stone} and Stahl \cite{stahl1} and \cite{stahl}.
\item We finally note that the above Proposition is important as it is a condition on the initial data and can therefore be reasonably checked. Checking the boundary approaches boundary assumption directly requires knowledge of the behaviour of the limit of the flow, which is not always easy, or possible, to obtain. However, since the boundary approaches boundary assumption is more general, it is this condition that we will continue to refer to in the remainder of this work.
\end{enumerate}
\end{Remark}

The second assumption is one used in the case without boundary. The assumption, or rather, hypothesis, is fundamental to the works of Brakke \cite{brakke} and Ecker \cite{ecker1} in their analysis of mean curvature flows without boundary. As we show the regularity of the interior of our flow by application of the analysis for flows without boundary, we also need to assume the same hypothesis. Named the area continuity and unit density hypothesis, the hypothesis is also instrumental in our analysis of the regularity of the boundary.

\begin{Definition} \label{areacontassump}
Let $\cl{M}=(M_t)_{t\in [0,T)}$ be a $MCF(N)$. $\cl{M}$ is said to satisfy the {area continuity and unit density hypothesis at time $T$} if the hypersurfaces $M_t$ converge in the sense of measures to a $\Hm{n}$-measurable, 
countably $n$-rectifiable subset $M_T $ of $\R^{n+1}$ of locally finite $\Hm{n}$-measure. That is,
\[ \lim_{t\nearrow T}\int_{M_t}\psi d\Hm{n} = \int_{M_T}\psi d\Hm{n}\]
for all $\psi \in C^0_0(\R^{n+1})$.
\end{Definition}
\begin{Remark}
Note that in this work we understand a countably $n$-rectifiable set to be any subset of $\R^{n+1}$, $M$, which can be expressed as 
\[ M \subset M_0 \cup \bigcup_{i=1}^{\infty}F_i(\Rn),\]
where $\Hm{n}(M_0)=0$ and the $F_i:\Rn \rightarrow \R^{n+1}$ are Lipschitz functions.
\end{Remark}

Having stated our assumptions, we are now in a position to give a precise statement of our main theorem.
\begin{Theorem}\label{globalregularity}(Main regularity theorem)
Let $\cl{M}=(M_t)_{t\in I}$ be either
\begin{enumerate}[(i)]
\item a $MCF(N,S)$ satisfying the boundary approaches boundary assumption, or
\item a $MCF(N,T)$,
\end{enumerate}
that satisfies the area continuity and unit density hypothesis. then
\begin{equation}\label{mainthmeqn}
\Hm{n}(\partial M_T)=0 \hbox{ and }\Hm{n}(sing_T\cl{M})=0.
\end{equation}
\end{Theorem}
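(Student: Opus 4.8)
The strategy is to reduce the boundary regularity statement to the interior regularity theory for mean curvature flow without boundary (Brakke/Ecker), via a reflection/doubling argument, and then to handle the genuinely new contribution to the singular set---points on $\Sigma$ reached by the flow---using the two assumptions. The two conclusions $\Hm{n}(\partial M_T)=0$ and $\Hm{n}(sing_T\cl{M})=0$ will be established together, since $\partial M_T$ is precisely the part of $M_T$ lying on $\Sigma$ (by the boundary approaches boundary assumption in case (i), and by definition in case (ii)), and the singular set away from $\Sigma$ is controlled by the classical theory.

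\textbf{Step 1: Interior points.} For $x_0\in M_T\setminus\Sigma$, choose $\rho>0$ with $B_\rho(x_0)\cap\Sigma=\emptyset$; inside $B_\rho(x_0)$ the flow $\cl{M}$ is an ordinary mean curvature flow without boundary, and the area continuity and unit density hypothesis holds there by assumption. Hence, by \eqref{base} applied locally (the Brakke--Ecker regularity theorem, which the paper cites and is allowed to assume), $\Hm{n}(sing_T\cl{M}\cap B_\rho(x_0))=0$. A countable covering of $M_T\setminus\Sigma$ by such balls---possible since $M_T$ has locally finite $\Hm{n}$-measure and is separable---gives $\Hm{n}(sing_T\cl{M}\setminus\Sigma)=0$. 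This reduces everything to analyzing $M_T\cap\Sigma$, which equals $\partial M_T$ in both cases, so it suffices to prove $\Hm{n}(\partial M_T)=0$ and that $\Hm{n}$-a.e.\ point of $\partial M_T$ is a regular point in the sense of Definition \ref{singregpoints}.

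\textbf{Step 2: Good boundary points.} The core of the argument is to show that ``most'' points of $\partial M_T$ are well behaved. Following Section 4 of the paper (which I may invoke), one identifies a full-$\Hm{n-1}$-measure, or rather full-$\Hm{n}$ in the ambient sense, set of points $x_0\in\partial M_T$ at which appropriate monotonicity-formula Gaussian densities exist and a tangent flow is well defined. Near such a point one performs a \emph{reflection} across $\Sigma$: since $\Sigma$ is $C^3$ and the flow meets $\Sigma$ orthogonally (the Neumann condition $\la\nu,\nu_\Sigma\ra=0$), the doubled surface $M_t\cup R_\Sigma(M_t)$ is, in suitable Fermi coordinates adapted to $\Sigma$, a $C^{1}$ (indeed better, after straightening $\Sigma$) solution of a perturbed mean curvature flow without boundary. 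The local curvature estimates of Section 5---which I am free to cite as established there---then apply to the doubled flow, and yield that the tangent flow at a good boundary point is a multiplicity-one plane meeting $\Sigma$'s tangent plane orthogonally. In case (i) the mean convexity/boundary approaches boundary assumption is what guarantees no interior obstacle collisions pollute this set, i.e.\ that $M_T\cap\Sigma=\partial M_T$ so the reflection is legitimate; in case (ii) there is nothing to exclude.

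\textbf{Step 3: From good points to the measure estimate, and covering.} At each good point the local regularity obtained in Step 2 shows $B_\rho(x_0)\cap M_{T}$ is, for some $\rho>0$, a smooth manifold-with-boundary meeting $\Sigma$ orthogonally along $\partial M_T\cap B_\rho(x_0)$, an $(n-1)$-dimensional submanifold---hence $x_0$ is a regular point, and moreover $\Hm{n}(\partial M_T\cap B_\rho(x_0))=\Hm{n}((n-1)\text{-manifold})=0$. Thus the bad set $\Sigma_{\mathrm{bad}}\subset\partial M_T$ of non-good points has, by the construction in Section 4, $\Hm{n}$-measure zero, while $\partial M_T\setminus\Sigma_{\mathrm{bad}}$ is covered by countably many balls in each of which it is $(n-1)$-rectifiable and so $\Hm{n}$-null. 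Combining, $\Hm{n}(\partial M_T)=0$; and since every good boundary point is regular, $sing_T\cl{M}\cap\Sigma\subset\Sigma_{\mathrm{bad}}$, which together with Step 1 gives $\Hm{n}(sing_T\cl{M})=0$, proving \eqref{mainthmeqn}.

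\textbf{Main obstacle.} The delicate point is Step 2: making the reflection across a curved $C^3$ support surface rigorous so that the doubled flow is regular enough for the without-boundary regularity theory---and for the Section 5 curvature estimates---to apply, and doing so \emph{uniformly} enough near a good point that one actually extracts a smooth multiplicity-one tangent plane rather than merely a stationary varifold. Controlling the error terms from the curvature of $\Sigma$ (which is where $\kappa_\Sigma$ and, in case (i), the sign condition $H_\Sigma>0$ enter) in the monotonicity formula for the doubled flow is the technical heart of the matter, and is exactly what the local curvature estimates of Section 5 are designed to supply.
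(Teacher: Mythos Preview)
Your approach diverges from the paper's in a way that introduces a real gap. The paper does \emph{not} use reflection or doubling across $\Sigma$; Section~5 derives the local curvature bounds directly for the flow-with-boundary by combining the existence of the approximate tangent plane of $M_T$ at $x_0\in R_{II}$ (Lemma~\ref{N1}), the clearing-out lemma to upgrade integral to pointwise height control (Lemma~\ref{N2}), and a point-picking blow-up (Lemma~\ref{N5}), with no reference to a doubled surface. So invoking ``the Section~5 estimates applied to the doubled flow'' is not legitimate: those estimates are not established in that setting, and the error terms from reflecting across a curved $\Sigma$ that you flag as the main obstacle are never confronted anywhere in the paper.

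More seriously, your Step~3 asserts that at a good boundary point $x_0$ the limit $B_\rho(x_0)\cap M_T$ is a smooth \emph{embedded} manifold-with-boundary, so that $x_0$ is regular and $\partial M_T$ is locally an $(n-1)$-manifold. This does not follow from curvature bounds alone: the paper explicitly observes (Remark after Theorem~\ref{N8}) that the Arzel\`a--Ascoli limit $M_0$ may be only \emph{immersed}, so regularity in the sense of Definition~\ref{singregpoints} is not guaranteed at $x_0$. The paper therefore never concludes that good boundary points are regular. Instead it runs a purely measure-theoretic argument (Theorem~\ref{N8}): pass to the one-sided limit $M_0=\lim_{t\nearrow T} M_t^G\subset\overline G$; the Neumann condition forces $\nu_\Sigma(x)\in T_xM_0$ whenever $T_xM_0$ exists at $x\in P_T^G$; but this is incompatible with $M_0\subset\overline G$, since a test function supported in the rolling ball on the $G^c$-side has zero integral on every rescaling $M_0^{x,\lambda}$ yet positive integral on $T_xM_0$. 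Hence $T_xM_0$ fails to exist at every $x\in P_T^G$, and Theorem~\ref{rectifiablecharacteristics1} gives $\Hm{n}(P_T^G\cap B_{\rho_0/2}(x_0))=0$; the same works for $P_T^c$, and a covering argument concludes. Your proposal misses this mechanism entirely and replaces it with a regularity assertion that is neither proved nor, in general, true.
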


\section{Strategy and Regularity}
To prove Theorem $\ref{globalregularity}$, we break $sing_T\cl{M}$ into several parts and then consider each one separately. 

We first notice that 
$$\Hm{n}(sing_T\cl{M}) = \Hm{n}(sing_T\cl{M} \cap \partial M_T)+\Hm{n}(sing_T\cl{M} \sim \partial M_T)$$
(where $\sim$ denotes set subtraction). We may then immediately deal with $\Hm{n}(sing_T\cl{M} \sim \partial M_T)$ by noting that away from $\partial M_t$ we may use localising arguments to apply the results on mean curvature flows without boundary.

\begin{Lemma}\label{interior}
Let $\cl{M}=(M_t)_{t\in [0,T)}$ be either 
\begin{enumerate}[(i)]
\item a $MCF(N,S)$ satisfying the boundary approaches boundary assumption, or
\item a $MCF(N,T)$,
\end{enumerate}
satisfying the area continuity and unit density hypothesis. Then
$$\Hm{n}(sing_T\cl{M} \sim \partial M_T)=0.$$
\end{Lemma}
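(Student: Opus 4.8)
The plan is to localise away from the limit boundary and then quote the interior regularity theory for mean curvature flows \emph{without} boundary. Since, by Definition \ref{singregpoints}(i), any $x_0\notin M_T$ is automatically a regular point, one has $sing_T\cl{M}\subseteq M_T$; hence it is enough to cover $M_T\setminus\partial M_T$ by countably many balls $B$ with $\Hm{n}(sing_T\cl{M}\cap B)=0$.

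First I would fix $x_0\in M_T\setminus\partial M_T$ and show there are $\rho=\rho(x_0)>0$ and $t_1=t_1(x_0)<T$ with $\partial M_t\cap B_\rho(x_0)=\emptyset$ for all $t\in[t_1,T)$. If not, then for each $k\in\N$ one could choose $t_k\in[T-\tfrac1k,T)$ and $x_k\in\partial M_{t_k}\cap B_{1/k}(x_0)$; after extracting a strictly increasing subsequence $t_{k_j}\nearrow T$ this gives $\partial\cl{M}\to_T x_0$, i.e.\ $x_0\in\partial M_T$, contradicting the choice of $x_0$. (In case (i) this is immediate and cheaper: the boundary approaches boundary assumption forces $x_0\notin\Sigma$, and as $\partial M_t\subseteq\Sigma$ with $\Sigma$ closed, any $\rho<\rmdist(x_0,\Sigma)$ works; one may then also assume $B_\rho(x_0)\cap\Sigma=\emptyset$ and, since $x_0\in\overline G\setminus\Sigma$, that $B_\rho(x_0)\subseteq G$.) The same argument, with $\partial M_T$ in place of $\partial M_t$, also lets me arrange $\partial M_T\cap B_\rho(x_0)=\emptyset$ after shrinking $\rho$.

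Next, on $B_\rho(x_0)\times[t_1,T)$ the family $M_t\cap B_\rho(x_0)$ is a smooth properly embedded hypersurface without boundary solving $\partial_t x=\vec H(x)$, so it is a mean curvature flow without boundary in $B_\rho(x_0)$ (in case (i) $\Sigma$ does not meet the ball; in case (ii) it is irrelevant there by the definition of a $MCF(N,T)$). I would then note that the area continuity and unit density hypothesis restricts to $B_\rho(x_0)$ --- measure convergence against $C^0_0(B_\rho(x_0))$ follows from the global statement, and countable $n$-rectifiability and unit density of $M_T$ are local --- so the localised form of $(\ref{base})$, namely the interior regularity theory of Brakke \cite{brakke} and Ecker \cite{ecker1}, applies in $B_\rho(x_0)$ and yields $\Hm{n}(sing_T\cl{M}\cap B_\rho(x_0))=0$. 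One should check here that the notion of regular point from Definition \ref{singregpoints} agrees inside $B_\rho(x_0)$ with the boundaryless one; this is clear because there is no boundary in the ball (and in case (i) also $B_\rho(x_0)\cap\Sigma=\emptyset$). Finally, $\{B_{\rho(x_0)}(x_0)\}_{x_0\in M_T\setminus\partial M_T}$ is an open cover of $M_T\setminus\partial M_T$; since $\R^{n+1}$ is Lindel\"of there is a countable subcover $\{B_j\}_{j\in\N}$, and
$$\Hm{n}(sing_T\cl{M}\setminus\partial M_T)\ \leq\ \sum_{j\in\N}\Hm{n}(sing_T\cl{M}\cap B_j)\ =\ 0.$$

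The step I expect to be the real obstacle is the middle one: making sure that ``no boundary near $x_0$ for $t$ close to $T$'' together with the area continuity and unit density hypothesis is \emph{precisely} the hypothesis package needed by the local regularity theory for boundaryless flows, and that Definition \ref{singregpoints} restricts correctly to the ball. The rest is a soft topological argument from the definition of $\partial M_T$ and a routine Lindel\"of covering.
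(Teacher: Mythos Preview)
Your proof is correct and follows essentially the same strategy as the paper: localise away from $\partial M_T$, observe that the restricted flow is a boundaryless mean curvature flow, and invoke the interior regularity theory (the paper simply cites Corollaries 4.6 and 8.1/8.2 of \cite{koeller4}, which package the Brakke--Ecker result in the form needed here, and then covers). Your version is in fact slightly more careful than the paper's one-line treatment of case (ii), where it is asserted that $(B_\rho(x_0)\cap M_t)_{t\in I}$ is boundaryless over the \emph{entire} interval $I$; as you correctly note, one only obtains this for $t\in[t_1,T)$, which is all that matters for regularity at time $T$.
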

\begin{proof}
Consider firstly case (i). Corollaries 4.6 and 8.1 in Koeller \cite{koeller4} imply that 
$$\Hm{n}(sing_T\cl{M} \sim \Sigma)=0,$$
as for any $x_0\not\in M_T$, $\cl{M}\not\rightarrow_Tx_0$ and thus $x_0$ is regular. Since, by the boundary approaches boundary assumption, $\partial M_T = M_T\cap \Sigma$, the result follows.

In case (ii), we note that for any $x_0 \not\in \partial M_T$, there exists a $\rho >0$ such that $(B_{\rho}(x_0) \cap M_t)_{t\in I}$ is a mean curvature flow without boundary. It now follows, again from Corollaries 4.6 and 8.2 in \cite{koeller4} that
$$\Hm{n}(sing_T\cl{M}\sim \partial M_T)=0.$$
\end{proof}
It follows from Lemma $\ref{interior}$ that, both in the solid and traversable boundary cases, it is sufficient to consider the Hausdorff measure of $\partial M_T$.

An immediate application of the above Lemma allows us to reduce our attention to just one of the two cases, namely the traversable boundary case. The solid boundary case will then follow.
\begin{Corollary}\label{justTcase}
If, for each $MCF(N,T)$ satisfying the area continuity and unit density hypothesis, $\cl{M}=(M_t)_{t\in [0,T)}$, $\Hm{n}(\partial M_T)=0$, then for any $MCF(N,S)$ satisfying the area continuity and unit density hypothesis and the boundary approaches boundary assumption, $\cl{M}^*=(M_t)_{t\in [0,T^*)}$,
$$\Hm{n}(\partial M_{T^*})= 0 \hbox{ and } \Hm{n}(sing_T\cl{M}^*)=0.$$
\end{Corollary}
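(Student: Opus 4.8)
The plan is to feed the hypothesis of the Corollary the flow $\cl{M}^*$ itself, after observing that a $MCF(N,S)$ is, in particular, a $MCF(N,T)$. Comparing the systems $(\ref{ghosteqns})$ and $(\ref{MCFwNfBCeqns})$, the former is obtained from the latter by weakening $\partial M_t=M_t\cap\Sigma$ to $\partial M_t\subset M_t\cap\Sigma$ and by dropping the containment $M_t\subset\overline G$; hence the smooth family $(M_t)_{t\in[0,T^*)}$ underlying $\cl{M}^*$ also solves $(\ref{ghosteqns})$. Moreover the limit surface $M_{T^*}$ and limit boundary $\partial M_{T^*}$ of Definition \ref{klausreach} depend only on the flow on $[0,T^*)$, so they are unchanged whether one regards $\cl{M}^*$ as solid or as traversable; the same is true of the area continuity and unit density hypothesis at time $T^*$, which therefore also holds for $\cl{M}^*$ read as a traversable flow.

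Next I would reconcile the maximal times. Let $\widetilde{\cl{M}}=(\widetilde M_t)$ be the maximal $MCF(N,T)$ extending $(M_t)_{t\in[0,T^*)}$, with maximal time $\widetilde T\ge T^*$. If $\widetilde T>T^*$, then the traversable flow is smooth on a neighbourhood of $T^*$, so $M_{T^*}=\widetilde M_{T^*}$ is a smooth manifold with smooth compact boundary and hence $\Hm{n}(\partial M_{T^*})=0$ trivially. (In fact, under the boundary approaches boundary assumption this case cannot occur: $M_{T^*}\cap\Sigma=\partial M_{T^*}$ together with smoothness of $M_{T^*}$ would make every point of $M_{T^*}$ a regular point in the sense of Definition \ref{singregpoints}(iii), contradicting maximality of $T^*$ for the solid problem; so one may simply take $\widetilde T=T^*$.) If $\widetilde T=T^*$, then $\widetilde{\cl{M}}$ is a $MCF(N,T)$ in the precise sense of the running convention, with $T^*$ its maximal time, it satisfies the area continuity and unit density hypothesis, and it agrees with $\cl{M}^*$ on $[0,T^*)$, so $\partial\widetilde M_{T^*}=\partial M_{T^*}$. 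The assumed conclusion of the Corollary then yields $\Hm{n}(\partial M_{T^*})=0$ in this case as well.

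Finally I would assemble the two pieces. By case (i) of Lemma \ref{interior} (applicable since $\cl{M}^*$ is a $MCF(N,S)$ satisfying the boundary approaches boundary assumption and area continuity/unit density), $\Hm{n}(sing_{T^*}\cl{M}^*\sim\partial M_{T^*})=0$. Since
\[
sing_{T^*}\cl{M}^*=\bigl(sing_{T^*}\cl{M}^*\cap\partial M_{T^*}\bigr)\cup\bigl(sing_{T^*}\cl{M}^*\sim\partial M_{T^*}\bigr),
\]
monotonicity and subadditivity of $\Hm{n}$ give $\Hm{n}(sing_{T^*}\cl{M}^*)\le\Hm{n}(\partial M_{T^*})+0=0$, which together with $\Hm{n}(\partial M_{T^*})=0$ is exactly the assertion.

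The only genuine obstacle here is bookkeeping rather than geometry: one has to make sure that the object named in the hypothesis — a traversable flow, run to \emph{its own} maximal time, satisfying the area continuity and unit density hypothesis — really matches what $\cl{M}^*$ supplies, which is the content of the second paragraph (the boundary approaches boundary assumption is what makes the two maximal times agree). Once that identification is in place, everything else is elementary set manipulation and an appeal to Lemma \ref{interior}.
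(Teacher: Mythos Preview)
Your proof is correct and follows essentially the same route as the paper's: recognize that a $MCF(N,S)$ is in particular a $MCF(N,T)$, extend to the maximal traversable flow, split into the two cases $\widetilde T>T^*$ (smooth $(n-1)$-dimensional boundary, hence $\Hm{n}$-null) and $\widetilde T=T^*$ (apply the hypothesis), and finish with Lemma~\ref{interior}. Your parenthetical claim that the boundary approaches boundary assumption forces $\widetilde T=T^*$ is an extra observation the paper does not make (it simply treats both cases, as does your main line), and is not needed for the argument.
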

\begin{proof}
Let $\cl{M}=(M_t)_{t\in [0,T_S)}$ be a $MCF(N,S)$ satisfying the area continuity and unit density hypothesis and the boundary approaches boundary assumption. Then, directly from the definitions of the flows with solid and traversable boundary, $\cl{M}$ is also a $MCF(N,T)$ with $T_S$ smaller than or equal to the maximal time, $T$, for which the flow may be continued when considered as a $MCF(N,T)$. That is, we may extend the flow $\cl{M}$ to $\cl{M}':=(M_t)_{t\in [0,T)}$, $T\geq T_S$, such that $\cl{M}'$ is a $MCF(N,T)$ with first singular time $T$.

Now, for each $t<T$, $\partial M_t=F_t(\partial M^n)$ for some smooth proper embedding $F_t$ on the smooth basis manifold $M^n$ as given in Definition $\ref{initialsurface}$. It follows that $\Hm{n}(\partial M_t)=0$. Furthermore, if $T=T_S$, then  $\cl{M}'=\cl{M}$ is a $MCF(N,T)$ satisfying the area continuity and unit density hypothesis, so that by the hypothesis of the Theorem, $\Hm{n}(\partial M_T)=0$. It now follows that $\Hm{n}(\partial M_{T_S})=0$ so that, by Lemma $\ref{interior}$, it now follows that 
$$\Hm{n}(sing_{T_S}\cl{M})\leq \Hm{n}(\partial M_{T_S})=0.$$
\end{proof}
\begin{Remark}
We deduce from Lemma $\ref{interior}$ and Corollary $\ref{justTcase}$ that it is sufficient to prove that $\Hm{n}(\partial M_T)=0$ for any $MCF(N,T)$. This is our aim for the remainder of the paper. From this point on, therefore, unless otherwise specified, any references to a flow, $\cl{M}=(M_t)_{t\in I}$, will refer to a $MCF(N,T)$.
\end{Remark}
To consider $\Hm{n}(\partial M_T)$ we break the set $\partial M_T$ up into further smaller parts.

Firstly, we recall from standard geometric measure theory (see, for e.g., Federer \cite{federer} or Simon \cite{simon1}) the following result.

\begin{Theorem}\label{rectifiablecharacteristics1}
Let $M\subset \R^{n+1}$ be a countably $n$-rectifiable set of locally finite measure. Then, for $\Hm{n}$-almost all $x\in \R^{n+1}$ either
\begin{enumerate}[(I)]
\item $\Theta^n(\Hm{n},M,x):=\lim_{\rho \searrow 0}\frac{\Hm{n}(B_{\rho}(x)\cap M)}{\omega_n\rho^n}=0,$ or 
\item the approximate tangent space, $T_xM$, of $M$ at $x$ exists. That is, 
\[\lim_{\lambda\searrow 0}\int_{M^{x,\lambda}}\phi d\Hm{n} = \int_{T_xM}\phi d\Hm{n} \]
for all $\phi\in C_0^0(\R^{n+1})$, 
\end{enumerate}
where $M^{x,\lambda}=\lambda^{-1}(M-x)$ for $\lambda > 0$. 

In particular, for any $MCF(N,T)$ satisfying the area continuity and unit density hypothesis, $\cl{M}= (M_t)_{t\in [0,T)}$, either (I) or (II) holds for $\Hm{n}$-almost all $x\in \R^{n+1}$ with $M$ replaced by $M_T$.
\end{Theorem}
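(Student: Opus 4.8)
The statement is the standard rectifiability dichotomy from geometric measure theory, so the plan is to derive it from the structure theory of rectifiable sets rather than from anything specific to mean curvature flow. First I would recall that $M$ being countably $n$-rectifiable of locally finite $\Hm{n}$-measure means, in the sense recorded in the Remark following Definition \ref{areacontassump}, that $M$ differs by an $\Hm{n}$-null set from a countable union of Lipschitz images of $\Rn$. By a standard argument (see Simon \cite{simon1}, using the Lipschitz extension and the area formula), such a set can be covered, up to an $\Hm{n}$-null set, by countably many embedded $C^1$ $n$-submanifolds $N_i$ of $\R^{n+1}$; this reduces the problem to understanding $\Hm{n}$-a.e. density and tangent behaviour relative to each $N_i$.

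The core of the argument is then the density/tangent-space decomposition. For a fixed $C^1$ submanifold $N_i$, at every point $x \in N_i$ the density $\Theta^n(\Hm{n}, N_i, x) = 1$ and the approximate tangent space $T_x N_i$ exists and equals the classical tangent plane. I would then invoke the standard measure-theoretic fact (Federer \cite{federer}, or Simon \cite{simon1}, Chapter 3) that for $\Hm{n}$-a.e. $x$, exactly one of two things happens: either $x$ lies in at most an $\Hm{n}$-null portion of $M$ in the sense that $\Theta^{*n}(\Hm{n}, M, x) = 0$ — giving alternative (I) — or $x$ is a point of positive upper density, in which case $x$ belongs to some $N_i$ with $\Theta^n(\Hm{n}, M \setminus N_i, x) = 0$, so that near $x$ the set $M$ looks, in measure, exactly like $N_i$, and therefore the blow-ups $M^{x,\lambda} = \lambda^{-1}(M - x)$ converge as measures to $T_x N_i = T_x M$. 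The key technical ingredients here are: the locally-finite-measure hypothesis (needed to make the density and blow-up limits meaningful), the fact that a countably $n$-rectifiable set with locally finite $\Hm{n}$-measure has $\Theta^{*n} < \infty$ $\Hm{n}$-a.e., and that on the rectifiable part the upper and lower densities agree and equal $1$.

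The final sentence of the statement is then immediate: by the area continuity and unit density hypothesis (Definition \ref{areacontassump}), the limit surface $M_T$ of a $MCF(N,T)$ is precisely a $\Hm{n}$-measurable, countably $n$-rectifiable set of locally finite $\Hm{n}$-measure, so the general result applies verbatim with $M = M_T$. I expect the main obstacle to be purely expository rather than mathematical: the result is genuinely classical, so the "proof" amounts to quoting the correct statements from \cite{federer} or \cite{simon1} and checking that the definition of countable rectifiability used in this paper (via Lipschitz images, as in the Remark after Definition \ref{areacontassump}) matches the one under which those structure theorems are proved — which it does, since Lipschitz images of $\Rn$ and countable unions of $C^1$ $n$-submanifolds generate the same class up to $\Hm{n}$-null sets.
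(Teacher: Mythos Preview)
Your proposal is correct and matches the paper's treatment: the paper does not prove this theorem at all but simply recalls it as a standard result from geometric measure theory, citing Federer \cite{federer} and Simon \cite{simon1}, exactly as you do. Your sketch of the underlying structure-theoretic argument and your observation that the final sentence follows immediately from Definition~\ref{areacontassump} are accurate, and indeed more detailed than what the paper itself provides.
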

For a preselected $MCF(N,T)$, $\cl{M}=(M_t)_{t\in [0,T)}$, and for each selection $J\in \{I, II\}$, we define 
\begin{equation}\label{rectpoints1}
R_J:=\{x\in \R^{n+1}:\hbox{ Theorem } \ref{rectifiablecharacteristics1} (J) \hbox{ holds at }x \hbox{ with } M=M_T\}.
\end{equation}
To prove Theorem $\ref{globalregularity}$ it now follows, from Theorem $\ref{rectifiablecharacteristics1}$, that we need only consider points in $\partial M_T \cap R_J$ for $J\in \{I,II\}$. Such points are, however, still not necessarily easy to work with. We therefore introduce Ecker's (\cite{ecker1}) good points, which are points around which the area of the flow behaves well toward the limit.
\begin{Definition}\label{goodpoints}
Let $\cl{M}=(M_t)_{t\in [0,T)}$ be a $MCF(N,T)$. For $\alpha \geq 0$ we define 
$$G_{T}^{\alpha}=
\left\{x\in \R^{n+1}:\limsup_{\rho\searrow 0}\frac{1}{\rho^n}\int_{T-\rho^2}^T
\int_{M_t \cap B_{\rho}(x)}|\vec{H}|^2d\Hm{n}\leq \alpha^2\right\} $$
and
$$\calG:=\bigcap_{\alpha \geq 0}G_{T}^{\alpha}.$$
We say that $x\in \R^{n+1}$ is a good point if $x\in \calG$.
\end{Definition}
We may restrict our attention to good points, since almost all points are good points. 
\begin{Lemma}\label{nobadpoints}
Let $\cl{M}=(M_t)_{t\in [0,T)}$ be a $MCF(N,T)$. Then 
$$\Hm{n}(\R^{n+1} \sim \calG)=0.$$
\end{Lemma}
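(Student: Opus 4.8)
The plan is to show that $\Hm{n}(\R^{n+1}\sim\calG)=0$ by first reducing to a single ``bad'' level set and then estimating its measure with a Vitali-type covering argument fed by a uniform area bound along the flow. Since $\calG=\bigcap_{\alpha\ge 0}G_T^\alpha$, a point $x$ fails to be good exactly when there is some $\alpha$ (equivalently, some positive integer $k$) with
\[
\limsup_{\rho\searrow 0}\frac{1}{\rho^n}\int_{T-\rho^2}^T\int_{M_t\cap B_\rho(x)}|\vec H|^2\,d\Hm{n}\,dt>k^2 .
\]
Thus $\R^{n+1}\sim\calG=\bigcup_{k\in\N} B_k$, where $B_k$ is the set of points violating the bound with $\alpha=k$, and it suffices to prove $\Hm{n}(B_k)=0$ for each fixed $k$. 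First I would fix $k$ and an arbitrary $\delta>0$ and an open set $U\supset B_k$ with small measure (possible once we know $B_k$ is contained in the compact reachable set and is measurable), and at every $x\in B_k$ choose arbitrarily small radii $\rho=\rho(x)<\delta$ with $B_\rho(x)\subset U$ and
\[
\int_{T-\rho^2}^T\int_{M_t\cap B_\rho(x)}|\vec H|^2\,d\Hm{n}\,dt>\tfrac{k^2}{2}\,\rho^n .
\]

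The key analytic input is a global bound on the space-time integral of $|\vec H|^2$: by the first variation/monotonicity-type identity for $MCF(N,T)$ (the boundary term in the first variation vanishes because $\la\nu,\nu_\Sigma\ra=0$ along $\partial M_t$, so the Neumann condition makes the boundary contribution drop out exactly as in the closed case), one has, for $0<t<T$,
\[
\Hm{n}(M_t)+\int_t^T\int_{M_s}|\vec H|^2\,d\Hm{n}\,ds=\Hm{n}(M_t)\le \Hm{n}(M_0),
\]
hence $\int_0^T\int_{M_s}|\vec H|^2\,d\Hm{n}\,ds\le \Hm{n}(M_0)=:E<\infty$. In particular, for any $\tau>0$,
\[
\int_{T-\tau^2}^T\int_{M_s}|\vec H|^2\,d\Hm{n}\,ds \longrightarrow 0 \quad\text{as }\tau\searrow 0,
\]
so by restricting attention to $x$ whose balls lie in a slab $\{t>T-\tau_0^2\}$ we may assume the \emph{total} space-time Dirichlet-type energy of $|\vec H|^2$ over the relevant slab is as small as we like, say $<\eta$.

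Now apply the Vitali $5r$-covering lemma to the family $\{B_{\rho(x)}(x):x\in B_k\}$ to extract a countable pairwise disjoint subfamily $\{B_{\rho_i}(x_i)\}$ with $B_k\subset\bigcup_i B_{5\rho_i}(x_i)$. Then
\[
\Hm{n}_{5\delta}(B_k)\le \sum_i \omega_n (5\rho_i)^n = 5^n\omega_n\sum_i \rho_i^n
\le 5^n\omega_n\cdot\frac{2}{k^2}\sum_i\int_{T-\rho_i^2}^T\int_{M_t\cap B_{\rho_i}(x_i)}|\vec H|^2\,d\Hm{n}\,dt,
\]
and because the balls $B_{\rho_i}(x_i)$ are disjoint and all sit in the slab of small total energy, the sum on the right is at most $\frac{2\cdot 5^n\omega_n}{k^2}\,\eta$. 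Letting $\eta\searrow 0$ (choosing the slab thinner) gives $\Hm{n}_{5\delta}(B_k)=0$, hence $\Hm{n}(B_k)=0$, and summing over $k$ yields $\Hm{n}(\R^{n+1}\sim\calG)=0$.

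The main obstacle I anticipate is the measurability/finiteness bookkeeping needed to run the covering argument cleanly: one must know $B_k$ is $\Hm{n}$-measurable (or at least that its outer measure can be approximated by open sets) and contained in a bounded region, and one must be careful that the radii $\rho(x)$ can be taken simultaneously small enough to force the balls into a thin time-slab while still witnessing the $\limsup$ inequality. The other point requiring care is the justification that the boundary term in the area-decay identity vanishes for $MCF(N,T)$ — this is where the orthogonality $\la\nu,\nu_\Sigma\ra=0$ is used — but this is standard (it appears in Stahl \cite{stahl1}, \cite{stahl}) and not deep; the genuinely load-bearing facts are the finite total energy $E<\infty$ and the Vitali covering lemma.
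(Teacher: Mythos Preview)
Your approach is the standard one (exactly what the paper's reference to Lemma~7.6 of \cite{koeller4}, and ultimately Ecker~\cite{ecker1}, is pointing to): bound the space--time integral of $|\vec H|^2$ via the area-decay identity, then run a Vitali $5r$-covering argument against the bad set. The core mechanism is correct.

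There is, however, a genuine indexing slip that breaks the decomposition as written. Since $G_T^\alpha$ is \emph{increasing} in $\alpha$, one has $\calG=\bigcap_{\alpha\ge 0}G_T^\alpha=G_T^0$, so $x\notin\calG$ means $\limsup_{\rho\searrow 0}\rho^{-n}\int_{T-\rho^2}^T\int_{M_t\cap B_\rho(x)}|\vec H|^2>0$, i.e.\ the $\limsup$ exceeds some \emph{small} positive number, not some $k^2$ with $k\in\N$. Your sets $B_k=\{\limsup>k^2\}$ are decreasing in $k$ and their union is just $B_1$, which misses every point with $0<\limsup\le 1$. The fix is trivial: set $B_k:=\{\limsup>1/k\}$ instead, so that $\R^{n+1}\sim\calG=\bigcup_k B_k$. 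Your Vitali estimate then reads $\Hm{n}_{5\delta}(B_k)\le 2\cdot 5^n\omega_n\, k\,\eta(\delta)$, which still tends to $0$ as $\delta\searrow 0$ for each fixed $k$, and the conclusion follows.

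Two smaller points: (i) the displayed area identity has a typo (the left side should read $\liminf_{t\nearrow T}\Hm{n}(M_t)+\int_t^T\int_{M_s}|\vec H|^2$ rather than $\Hm{n}(M_t)$ on both sides); (ii) the open set $U$ and the measurability/compactness discussion are unnecessary --- the Vitali covering bounds the Hausdorff \emph{outer} measure directly, and the only input you actually use later is the disjointness of the $B_{\rho_i}(x_i)$ together with $\rho_i<\delta$.
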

\begin{Remark}
The proof of Lemma $\ref{nobadpoints}$ is as in Lemma 7.6 in Koeller \cite{koeller4}. 
\end{Remark} 
It follows from Lemma $\ref{nobadpoints}$ that, in order to prove Theorem $\ref{globalregularity}$, it remains only to show that
$$\Hm{n}(\partial M_T \cap R_J\cap \calG)=0$$
for $J\in \{I,II\}$ and flows $\cl{M}=(M_t)_{t\in [0,T)}$. For $J=I$ the following Theorem provides the desired result.
\begin{Theorem}\label{pointI}
Let $\cl{M}$ be a $MCF(N,T)$ and $x\in \Sigma \cap R_J\cap \calG$. Then $\cl{M}\not\rightarrow_T x$. In particular, $x\not\in \partial M_T$ and 
$$\Hm{n}(\partial M_T \cap R_I\cap \calG)=0.$$
\end{Theorem}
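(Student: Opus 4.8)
The plan is to prove that the Gaussian density of $\cl{M}$ at the space–time point $(x,T)$ vanishes and then to feed this into a clearing‑out (equivalently, a local regularity) statement for mean curvature flow with Neumann free boundary conditions, which forces the flow to vacate a fixed ball about $x$ at all times sufficiently close to $T$ — that is, $\cl{M}\not\rightarrow_T x$. Granting this, the remaining assertions are formal: by definition $\partial M_T\subseteq\Sigma$, and $\partial M_T\subseteq M_T$ since $\partial M_t\subseteq M_t$ for every $t$; hence a point $x\in\partial M_T\cap R_I\cap\calG$ would satisfy both $\cl{M}\rightarrow_T x$ (as $x\in M_T$) and, by the claim (as $x\in\Sigma\cap R_I\cap\calG$), $\cl{M}\not\rightarrow_T x$ — absurd. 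Therefore $\partial M_T\cap R_I\cap\calG=\emptyset$ and $\Hm{n}(\partial M_T\cap R_I\cap\calG)=0$. The two hypotheses on $x$ feed the argument as follows. Since $x\in\calG=\bigcap_{\alpha\geq0}G_T^\alpha$, the limsup defining $G_T^\alpha$ is $\leq\alpha^2$ for every $\alpha\geq0$, hence
$$\lim_{\rho\searrow0}\frac{1}{\rho^n}\int_{T-\rho^2}^{T}\int_{M_t\cap B_\rho(x)}|\vec H|^2\,d\Hm{n}\,dt=0 .$$
Since $x\in R_I$ we have $\Theta^n(\Hm{n},M_T,x)=0$, and testing the area continuity and unit density hypothesis against a spatial cut‑off supported near $x$ transfers this smallness to the flowing surfaces: $\Hm{n}(M_t\cap B_\rho(x))=o(\rho^n)$, uniformly for $t$ close to $T$ and $\rho$ small.

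The heart of the matter is a Huisken‑type monotonicity formula centred at $(x,T)$ and adapted to the Neumann condition. Differentiating the Gaussian‑weighted area $\int_{M_t}\phi\,\Phi_{(x,T)}\,d\Hm{n}$ along the flow — with $\phi$ a fixed spatial cut‑off near $x$ and $\Phi_{(x,T)}$ the backward heat kernel — produces the usual nonpositive bulk term, a cut‑off error term, and a boundary integral over $\partial M_t\subset\Sigma$. The orthogonality $\la\nu_{M_t},\nu_\Sigma\ra=0$ identifies the outward conormal of $\partial M_t$ in $M_t$ with $\pm\nu_\Sigma$, and then the rolling ball condition together with the bound $\norm{A_\Sigma}^2+\norm{\nabla A_\Sigma}\leq\kS^2$ controls the boundary integral by a term of order $\kS$, which is absorbed into an exponential factor $e^{C\kS^{2\delta}(T-t)^\delta}$ — equivalently, one works with the $\kS$‑corrected kernel $\ekS$. (Alternatively, one reflects $M_t$ across $\Sigma$ and applies the boundaryless monotonicity formula to the doubled family $M_t\cup R_\Sigma(M_t)$, whose deviation from a genuine mean curvature flow is $O(\kS(1+|\vec H|))$, the $|\vec H|$‑part being small by the good‑point bound above.) In either case one obtains an almost‑monotone quantity whose limit as $t\nearrow T$ is the Gaussian density $\Theta_\Sigma(x,T)$; using the good‑point bound to control the defect between this density and the mass ratios of $M_T$ at $x$, and the area continuity hypothesis to pass from $M_t$‑ratios to $M_T$‑ratios, one concludes $\Theta_\Sigma(x,T)=0$.

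With $\Theta_\Sigma(x,T)=0$ and the good‑point bound available, I would invoke the Neumann free boundary form of Brakke's clearing‑out lemma (cf.\ Buckland \cite{buckland} and Koeller \cite{koeller4}): a vanishing Gaussian density at $(x,T)$, together with the controlled local energy, forces $M_t\cap B_\rho(x)=\emptyset$ for some $\rho>0$ and all $t$ sufficiently close to $T$. This is precisely $\cl{M}\not\rightarrow_T x$, which as noted above yields $x\notin\partial M_T$ and $\Hm{n}(\partial M_T\cap R_I\cap\calG)=0$. A good part of the required machinery (the boundary monotonicity formula and the clearing‑out lemma) should be directly available from Buckland and from \cite{koeller4}, with the point being only to check that the relevant steps do not use the extra hypotheses removed in this paper.

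I expect the main obstacle to be the boundary monotonicity formula, i.e.\ verifying that the free boundary term (equivalently, the reflection error) really is of order $\kS$ and $|\vec H|$ alone, so that on passing to $t\nearrow T$ it survives only as the harmless prefactor $e^{C\kS^{2\delta}(T-t)^\delta}\to1$ and a good‑point error that vanishes with the scale. The secondary technical point — the passage from $\Theta^n(\Hm{n},M_T,x)=0$ to $\Theta_\Sigma(x,T)=0$ — is exactly where both hypotheses of the theorem enter: area continuity to pass to the limit surface, and the good‑point property to kill the defect between mass ratios and Gaussian density.
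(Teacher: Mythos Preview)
Your approach would work, but it takes a substantial detour that the paper avoids. The key observation is already in your first paragraph: from $x\in R_I$ you have $\Theta^n(\Hm{n},M_T,x)=0$, and from $x\in\calG$ together with the integral estimate (Lemma~\ref{integralest} in the paper, which is precisely where the good-point property and the area continuity hypothesis combine) you get that $\rho^{-n}\Hm{n}(M_t\cap B_\rho(x))$ is uniformly small for $t\in[T-\rho^2,T)$ and $\rho$ small. This is already the hypothesis of the clearing-out Lemma in its mass-ratio form (Lemma~\ref{clearingout}), and that lemma directly yields $\cl{M}\not\rightarrow_T x$. This is the route taken in the references the paper cites (Ecker, Lemma~15.5, and Koeller, Corollary~8.1).

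Your plan instead passes through the Gaussian density $\Theta_\Sigma(x,T)$ via a boundary-adapted Huisken monotonicity formula, and then invokes a Gaussian-density version of clearing out. This is not wrong --- the boundary monotonicity formula exists (Buckland, Koeller) and a Gaussian-density clearing-out statement can be derived --- but it is considerably heavier machinery than the problem requires, and you correctly identify the monotonicity step as the main technical obstacle of \emph{your} argument. That obstacle simply does not arise in the direct route: no monotonicity formula, no reflection, no control of boundary terms is needed here. The mass-ratio clearing-out lemma plus Lemma~\ref{integralest} suffice. In short, you have the right two hypotheses doing the right two jobs, but you should feed the resulting small mass ratio straight into Lemma~\ref{clearingout} rather than first converting it into a Gaussian-density statement.
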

\begin{Remark}
The proof of Theorem $\ref{pointI}$ is as in Corollary 8.1 in Koeller \cite{koeller4} and Lemma 15.5 in Ecker \cite{ecker1}.
\end{Remark}
We conclude that in order to prove Theorem $\ref{globalregularity}$ it remains only to show, for any $MCF(N,T)$, $(M_t)_{t\in [0,T)}$, that
\begin{equation}\label{finalterm}
\Hm{n}(\partial M_T \cap R_{II}\cap \calG)=0.
\end{equation}
It is the proof of $(\ref{finalterm})$ that is the technical heart of this paper. The proof is presented in the following two sections. 
\begin{Remark}
The proofs of Lemma 7.6 and Corollary 8.1 in Koeller \cite{koeller4} are actually stated for mean curvature flows with Neumann free boundary conditions with a solid boundary satisfying the boundary approaches boundary condition. The statements and proofs, however, are identical in our present case and we therefore do not repeat them here.
\end{Remark}

\section{Local curvature bounds}
In this section we show that for any $MCF(N,T)$, $\cl{M}=(M_t)_{t\in [0,T)}$, and $x_0\in \partial M_T$ there is a radius $\rho>0$ such that the second fundamental form of $M_t$ is uniformly bounded in $B_{\rho}(x_0)\times [T-\rho^2,T)$. 

The proof is accomplished by firstly using the existence of the tangent plane to show an integral height excess decay result. This is then used to give an absolute height excess decay result. That is, that in sufficiently small balls and on relatedly small sized time intervals, the surfaces, as sets, are very near the tangent plane. 

This allows us to show that, in some smaller ball and time interval, the second fundamental form, and therefore curvature, remain bounded. That the local regularity result, $\Hm{n}(\partial M_T\cap R_{II}\cap \calG \cap B_{\rho}(x_0))=0$ holds, and therefore that Theorem $\ref{globalregularity}$ also holds, can be deduced from these local curvature estimates. The proof is given in the following section.

To start, we note firstly that our analysis in this section uses so called blow-up arguments regularly. That is, we analyse the surfaces $M_t$ under parabolic rescaling, which we define below.
\begin{Definition}\label{paradef}
Let $\cl{M}=(M_t)_{t\in [t_1,T)}$ be a $MCF(N,T)$, $\lambda>0$, $x_0\in \R^{n+1}$, and $t_0\in (0,T]$. The parabolic rescaling, or blow-up, of $\cl{M}$ by a factor of $\lambda$ around $(x_0,t_0)$ is the one-parameter family of smooth, properly embedded hypersurfaces
$$(M_s^{(x_0,t_0),\lambda})_{s\in [-\lambda^{-2}t_0,0)}$$
where
$$M_s^{(x_0,t_0),\lambda}:=\lambda^{-1}(M_{\lambda^2s+t_0}-x_0).$$
That is, $(M_s^{(x_0,t_0),\lambda})_{s\in [-\lambda^{-2}t_0,0)}$, is the result of the application of the change of variables
\begin{equation}\label{CoV}
y=\lambda^{-1}(x-x_0) \hbox{ and } s=\lambda^{-2}(t-t_0)
\end{equation}
to $\cl{M}$.
\end{Definition}
\begin{Remark}
\begin{enumerate}[(1)]
\item It is standard theory that the blow-up
$$(M_s^{(x_0,t_0),\lambda})_{s\in [-\lambda^{-2}t_0,0)}$$ 
of a mean curvature flow, $\cl{M}=(M_t)_{[0,T)}$, continues to be a mean curvature flow. See, for e.g., Buckland \cite{buckland} or Ecker \cite{ecker1}. It follows that the blow-up of a solution, $\cl{M}=(M_t)_{[0,T)}$, of ($\ref{ghosteqns}$) is a solution of ($\ref{ghosteqns}$) supported on $\lambda^{-1}(\Sigma-x_0)$ over the interval $I=[-\lambda^{-2}T,0)$.

\item Should the centre of a blow up $(x_0,t_0)$ be clear, we will write $M_s^{\lambda}$ to refer to the parabolic rescaling $M_s^{(x_0,T),\lambda}$.
\end{enumerate}
\end{Remark}

Now, to realise our intention of deducing properties of $M_t$ from the existence of a tangent plane to $M_T$ at $x_0$, we need to show that they are in some way related. This is the purpose of considering only good points, for at good points we have the following convergence property.

\begin{Lemma}\label{integralest}
Let $\cl{M}=(M_t)_{t\in [0,T)}$ be a $MCF(N,T)$ satisfying the area continuity and unit density hypothesis. Then there is an $A<\infty$ such that, for each $\alpha \in (0,1/2]$ and $x_0\in G_T^{\alpha}$, there is a $\rho_0>0$ such that  
$$\sup_{t\in [T-\rho^2,T]}\left|\int_{M_t}\phi d\Hm{n} - \int_{M_T}\phi d\Hm{n}\right| \leq 2\alpha(\sup |\phi| + 
\sqrt{A}\rho\sup|D\phi|)\rho^n$$
holds for all $\rho\in (0,\rho_0]$ and $\phi \in C_0^1(B_{\rho}(x_0))$.
\end{Lemma}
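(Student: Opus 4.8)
The plan is to integrate the first--variation identity for the area measure of $M_t$ forward in time, up to the singular time $T$, and then to estimate the two error terms that appear by means of the good--point hypothesis and of a local area bound. Fix $\phi\in C_0^1(B_\rho(x_0))$. Since $\partial F/\partial t=\vec H$ on all of $M^n\times I$ and, along mean curvature flow, $\partial_t(d\Hm{n})=-|\vec H|^2\,d\Hm{n}$, differentiating $\int_{M_t}\phi\,d\Hm{n}$ under the integral sign (no boundary terms arise: no integration by parts is performed, and in any case $\vec H=-H\nu$ is tangent to $\Sigma$ along $\partial M_t$ by the Neumann condition, consistently with $\partial M_t\subset\Sigma$) gives, for $t<t'<T$,
$$\int_{M_{t'}}\phi\,d\Hm{n}-\int_{M_t}\phi\,d\Hm{n}=\int_t^{t'}\int_{M_\tau}\bigl(\vec H\cdot D\phi-|\vec H|^2\phi\bigr)\,d\Hm{n}\,d\tau .$$
As $\phi$ is supported in $B_\rho(x_0)$, the left side is bounded in absolute value by $\sup|D\phi|\int_t^{t'}\int_{M_\tau\cap B_\rho(x_0)}|\vec H|\,d\Hm{n}\,d\tau + \sup|\phi|\int_t^{t'}\int_{M_\tau\cap B_\rho(x_0)}|\vec H|^2\,d\Hm{n}\,d\tau$.

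I would then let $t'\nearrow T$: by the area continuity and unit density hypothesis $\int_{M_{t'}}\phi\,d\Hm{n}\to\int_{M_T}\phi\,d\Hm{n}$, while both spacetime integrals on the right increase to their values over $[t,T)$. Put $D_\rho:=\int_{T-\rho^2}^T\int_{M_\tau\cap B_\rho(x_0)}|\vec H|^2\,d\Hm{n}\,d\tau$. Since $x_0\in G_T^\alpha$ we have $\limsup_{\rho\searrow0}\rho^{-n}D_\rho\le\alpha^2$, so there is $\rho_0=\rho_0(\alpha,x_0)$ with $D_\rho\le4\alpha^2\rho^n$ whenever $\rho\le\rho_0$. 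For $t\in[T-\rho^2,T)$ the $|\vec H|^2$--term is then at most $4\alpha^2\sup|\phi|\,\rho^n\le2\alpha\sup|\phi|\,\rho^n$, where $\alpha\le\tfrac12$ is used; and by the Cauchy--Schwarz inequality in spacetime the $|\vec H|$--term is at most $D_\rho^{1/2}\bigl(\int_{T-\rho^2}^T\Hm{n}(M_\tau\cap B_\rho(x_0))\,d\tau\bigr)^{1/2}\le2\alpha\,\rho^{n/2}\bigl(\int_{T-\rho^2}^T\Hm{n}(M_\tau\cap B_\rho(x_0))\,d\tau\bigr)^{1/2}$.

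It remains to produce a constant $A<\infty$, depending only on $\cl{M}$ (not on $\alpha$ or $x_0$), such that $\int_{T-\rho^2}^T\Hm{n}(M_\tau\cap B_\rho(x_0))\,d\tau\le A\rho^{n+2}$ for all sufficiently small $\rho$ (shrinking $\rho_0$ if necessary); inserting this bound gives exactly $2\alpha\bigl(\sup|\phi|+\sqrt A\,\rho\sup|D\phi|\bigr)\rho^n$, and since the left side of the estimate vanishes at $t=T$, taking the supremum over $t\in[T-\rho^2,T]$ completes the proof. For the area bound I would invoke Huisken's monotonicity formula for mean curvature flow with Neumann free boundary on $\Sigma$ (available, after reflection across $\Sigma$ with a $\kappa_\Sigma$--controlled error at small scales, from the theory already cited for blow--up arguments): the total area $\Hm{n}(M_t)$ is nonincreasing in $t$ (indeed $\frac{d}{dt}\Hm{n}(M_t)=-\int_{M_t}|\vec H|^2\le0$, the boundary contribution vanishing by the orthogonality of $M_t$ and $\Sigma$), hence $\Hm{n}(M_t)\le\Hm{n}(M_0)=:C_0$, and monotonicity bounds the Gaussian density $\int_{M_{T-\sigma^2}}(4\pi\sigma^2)^{-n/2}e^{-|x-y|^2/4\sigma^2}\,d\Hm{n}$, centred at $(y,T)$, by its value at a fixed scale $r_1$, which is $\le(4\pi r_1^2)^{-n/2}C_0=:C_1$ uniformly in $y$. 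Converting this to an area ratio yields $\Hm{n}(M_\tau\cap B_\sigma(y))\le C(n)C_1\sigma^n$ for $\sigma=\sqrt{T-\tau}\le r_1$; covering $B_\rho(x_0)$ by at most $C(n)(\rho/\sigma)^{n+1}$ balls of radius $\sigma$ gives $\Hm{n}(M_\tau\cap B_\rho(x_0))\le C(n)C_1\rho^{n+1}(T-\tau)^{-1/2}$, and integrating over $\tau\in[T-\rho^2,T)$ gives the claim with $A=2C(n)C_1$.

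The only genuinely delicate point is this last step: getting the local area bound with $A$ \emph{uniform in $x_0$} in the presence of the free boundary. This needs a monotonicity formula adapted to $\Sigma$ (or a reflection argument whose error terms are absorbed by the bounded geometry of $\Sigma$), which is valid only for radii below a threshold fixed by $\kappa_\Sigma$; as that threshold does not depend on $x_0$, it can be absorbed into $\rho_0(\alpha,x_0)$ without spoiling the uniformity of $A$. Everything else — the first--variation identity, the limit via the area continuity hypothesis, Cauchy--Schwarz, and the defining inequality of $G_T^\alpha$ — is routine.
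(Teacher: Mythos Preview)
Your argument is correct and is precisely the standard route: differentiate $\int_{M_t}\phi$ using the evolution of the area element, pass to the limit $t'\nearrow T$ via the area continuity hypothesis, control the $|\vec H|^2$-term directly by the definition of $G_T^\alpha$, and control the $|\vec H|$-term by Cauchy--Schwarz against a uniform local area bound coming from the (reflected) monotonicity formula. The paper does not give its own proof here but refers to Lemma~7.7 of \cite{koeller4}, which is in turn the free-boundary adaptation of Ecker's argument (\cite{ecker1}, Lemma~15.4); your write-up reproduces that argument faithfully, including the correct identification of the only genuinely boundary-sensitive step, namely the monotonicity-based area bound with its $\kappa_\Sigma$-dependent threshold on the scale.

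One small remark on efficiency: your covering of $B_\rho(x_0)$ by $C(n)(\rho/\sigma)^{n+1}$ balls of radius $\sigma=\sqrt{T-\tau}$, followed by integration of $(T-\tau)^{-1/2}$, works but is a detour. The monotonicity formula (with free boundary, as in Buckland \cite{buckland} or \cite{koeller4}) already gives the uniform area ratio bound $\Hm{n}(M_t\cap B_\rho(x))\le A\rho^n$ directly, for all $x$ and all $t<T$, once $\rho$ is below a $\kappa_\Sigma$-controlled threshold; integrating this over $[T-\rho^2,T)$ yields $A\rho^{n+2}$ in one line. This is the form in which the area bound appears in \cite{koeller4} and \cite{ecker1}, and it makes the independence of $A$ from $x_0$ more transparent.
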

\begin{Remark} 
\begin{enumerate}[(1)]
\item For the proof of Lemma $\ref{integralest}$ see Lemma 7.7 in Koeller \cite{koeller4}. Again this proof is for flows with solid boundary, which, however, remains unchanged for the case with traversable boundary.

\item We will actually apply Lemma $\ref{integralest}$ in its parabolically rescaled form which, by applying the change of variables $(\ref{CoV})$ for any $\lambda >0$, states
\begin{eqnarray}
\left|\int_{M_s^{(x_0,T),\lambda}}\phi  - \int_{M_T^{x_0,\lambda}}\phi \right| & = & \left|\int_{M_s^{(x_0,T),\lambda}}\phi  - \int_{M_0^{(x_0,T),\lambda}}\phi \right| \nonumber \\
& \leq & 2\alpha(\sup |\phi| + \sqrt{A}R\sup|D\phi|)R^n \label{intest}
\end{eqnarray}
for each $R\in (0,R_0]$, $s\in [-R_0^2,0]$, and $\phi \in C_C^1(B_R(0))$, where the integrals are taken with respect to $\Hm{n}$ and $R_0:=\lambda^{-1}\rho_0$.
\end{enumerate}
\end{Remark}

We now show the local height estimates for flows $\cl{M}=(M_t)_{t\in [0,T)}$ around points in $\partial M_T \cap R_{II} \cap \calG$. We note that here, and in the remainder of the work, $\pi_T:\R^n\rightarrow T$ denotes the orthogonal projection onto $T$. Furthermore, letting $\{e_1,...,e_{n+1}\}$ denote the standard basis for $\R^{n+1}$, we identify $span\{e_1,...,e_j\}$ with $\R^j$ for $1\leq j <n+1$ and write $x_i$ to denote the $i$th component of $x$, $\la x,e_i\ra$. We also note that for $1\leq j <n+1$, we write $B_r^j(x)$ to denote $B_r(x)\cap \R^j\subset \R^{n+1}$.

\begin{Lemma}\label{N1}
Let $\cl{M}=(M_t)_{t\in [0,T)}$ be a $MCF(N,T)$ satisfying the area continuity and unit density hypothesis and let $x_0\in \partial M_T \cap R_{II} \cap \calG.$

Then, for each $\e>0$, there exists $\rho_0=\rho_0(\e)>0$ such that 
$$\sup_{t\in (T-\rho_0^2,T)}\int_{M_t\cap B_{\rho}(x_0)}\left|\pi_{T_{x_0}^{\perp}M_T}(x-x_0)\right|^2d\Hm{n} \leq \e \rho^{n+2} \hbox{ for all } \rho\in (0,\rho_0].$$
\end{Lemma}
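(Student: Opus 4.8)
\emph{Plan.} The inequality is invariant under isometries and parabolic rescalings, and it concerns only the limit surface (through $R_{II}$) and the area of the flowing surfaces near $T$ (through $\calG$); so the plan is a two‑scale argument that combines the tangent‑plane convergence at $x_0\in R_{II}$ with the area‑continuity estimate of Lemma \ref{integralest} at the good point $x_0\in\calG$. No further use of the evolution equation is needed beyond what is packaged in Lemma \ref{integralest}. After a translation and rotation I would assume $x_0=0$, $t_0=T$, and $T_{x_0}M_T=\R^n=\mathrm{span}\{e_1,\dots,e_n\}$, so that $|\pi_{T_{x_0}^{\perp}M_T}(x-x_0)|=|x_{n+1}|$. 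Fix once and for all a cutoff $\tilde\phi\in C^1_C(B_2(0))$ with $0\le\tilde\phi\le1$, $\tilde\phi\equiv1$ on $B_1(0)$, and write $\tilde\phi_R(\cdot):=\tilde\phi(\cdot/R)$, supported in $B_{2R}(0)$, $\equiv1$ on $B_R(0)$, $|D\tilde\phi_R|\le CR^{-1}$.

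\emph{Step 1: the limit surface is already flat at the right rate.} Since $0\in R_{II}$, the blow‑ups $M_T^{0,\mu}=\mu^{-1}M_T$ converge as measures to $\R^n$ (Theorem \ref{rectifiablecharacteristics1}(II)). Testing this against the continuous, compactly supported function $x\mapsto|x_{n+1}|^2\tilde\phi(x)$ gives
\[
\omega(\mu):=\int_{M_T^{0,\mu}}|x_{n+1}|^2\tilde\phi\,d\Hm{n}\;\longrightarrow\;\int_{\R^n}|x_{n+1}|^2\tilde\phi\,d\Hm{n}=0\qquad(\mu\searrow0),
\]
so $\Omega(\lambda):=\sup_{0<\mu\le\lambda}\omega(\mu)\to0$ as $\lambda\searrow0$. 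A change of variables $x\mapsto x/R$ turns this into the scale‑uniform statement $\int_{M_T^{0,\lambda}}|x_{n+1}|^2\tilde\phi_R\,d\Hm{n}=R^{n+2}\omega(\lambda R)\le R^{n+2}\Omega(\lambda)$ for all $R\in(0,1]$.

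\emph{Step 2: transfer to the flowing surfaces and unrescale.} Because $0\in\calG\subseteq G_T^{\alpha}$ for every $\alpha\in(0,1/2]$, Lemma \ref{integralest} applies; I would use its rescaled form $(\ref{intest})$ with test function $\phi=|x_{n+1}|^2\tilde\phi_R$, which on $B_{2R}(0)$ has $\sup|\phi|\le 4R^2$ and $\sup|D\phi|\le CR$. This yields a constant $C_*=C_*(n,A)$ with
\[
\Bigl|\int_{M_s^{(0,T),\lambda}}|x_{n+1}|^2\tilde\phi_R\,d\Hm{n}-\int_{M_T^{0,\lambda}}|x_{n+1}|^2\tilde\phi_R\,d\Hm{n}\Bigr|\le C_*\alpha\,R^{n+2}
\]
for all $R\in(0,1]$ and all $s$ in a fixed neighbourhood of $0$ (here the rescaling factor $\lambda$ will be chosen below so that the radius $R_0$ in $(\ref{intest})$ exceeds $2$). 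Combining with Step 1, $\int_{M_s^{(0,T),\lambda}\cap B_R(0)}|x_{n+1}|^2\,d\Hm{n}\le(\Omega(\lambda)+C_*\alpha)R^{n+2}$. Now choose $\alpha=\alpha(\e)\in(0,1/2]$ with $C_*\alpha\le\e/2$ — this fixes the radius $\rho_0'$ produced by Lemma \ref{integralest} — and then pick the rescaling factor $\lambda=\rho_0\le\min\{1,\rho_0'/2\}$ small enough that $\Omega(\rho_0)\le\e/2$; the bound becomes $\e R^{n+2}$. Finally, given $\rho\in(0,\rho_0]$ and $t\in(T-\rho_0^2,T)$, set $R:=\rho/\rho_0\in(0,1]$ and $s:=(t-T)/\rho_0^2\in(-1,0)$; since $M_s^{(0,T),\rho_0}=\rho_0^{-1}M_{\rho_0^2 s+T}=\rho_0^{-1}M_t$, the change of variables $x=\rho_0 y$ multiplies the integral by $\rho_0^{n+2}$ and gives $\int_{M_t\cap B_\rho(x_0)}|\pi_{T_{x_0}^{\perp}M_T}(x-x_0)|^2\,d\Hm{n}\le\rho_0^{n+2}\,\e R^{n+2}=\e\rho^{n+2}$, uniformly in $t\in(T-\rho_0^2,T)$, as claimed.

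\emph{Main difficulty.} There is no genuine analytic obstacle once Lemma \ref{integralest} is granted; the content is measure theory plus the cited estimate. The only point that needs real care is the two‑parameter uniformity: the conclusion must hold simultaneously for \emph{all} radii $\rho\le\rho_0$ and \emph{all} times $t\in(T-\rho_0^2,T)$, whereas both inputs (the tangent‑plane convergence and Lemma \ref{integralest}) are naturally phrased at a single scale. Rescaling once, by the fixed factor $\rho_0$, and reducing everything to radii $R\le1$ and times $s\in(-1,0)$ is precisely what makes the two inputs line up; keeping the radius $\rho_0'$ supplied by Lemma \ref{integralest} conceptually distinct from the chosen rescaling factor $\rho_0$ is the one bookkeeping subtlety to watch.
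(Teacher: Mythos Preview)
Your approach is essentially the paper's: both combine the tangent--plane convergence at $x_0\in R_{II}$ with Lemma~\ref{integralest} at $x_0\in\calG$, assembled via a single parabolic rescaling and tested against $x_{n+1}^2$ times a cutoff. The paper packages this as a contradiction (take $\rho_j\searrow0$, $t_j\in(T-\rho_j^2,T)$ violating the estimate, rescale by $\lambda_j=\rho_j$, and contradict $(\ref{intest})$ at $R=2$ with a suitably small $\alpha$), while you give the direct quantitative version; the ingredients and the choice of $\alpha$ are identical.

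One point to watch in Step~2: you assert that $(\ref{intest})$ yields the bound $C_*\alpha R^{n+2}$ for all $R\in(0,1]$ and all $s$ in a \emph{fixed} neighbourhood of $0$. Read literally from the remark after Lemma~\ref{integralest} (which writes $s\in[-R_0^2,0]$) this is fine, but a careful rescaling of the lemma itself pairs the time window with the support radius, i.e.\ gives the $R^{n+2}$ bound only for $s\in[-(2R)^2,0]$. With that parabolic pairing your argument still runs verbatim and delivers the estimate with time range $(T-\rho^2,T)$ in place of $(T-\rho_0^2,T)$; this is exactly what the paper's own proof produces (note it takes $t_j\in(T-\rho_j^2,T)$) and exactly the hypothesis fed into Lemma~\ref{N2}.
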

\begin{proof}
Without loss of generality we may assume that $x_0=0$ and $T_{x_0}M_T=\R^n$ so that we need to show the existence of $\rho_0=\rho_0(\e)>0$ such that 
\begin{equation}\label{N1e1}
\sup_{t\in (T-\rho_0^2,T)}\int_{M_t\cap B_{\rho}(x_0)}x_{n+1}^2d\Hm{n} \leq \e \rho^{n+2} \hbox{ for all } \rho\in (0,\rho_0].
\end{equation}
Supposing that ($\ref{N1e1}$) is not true, then there exists a sequence  $\rho_j \searrow 0$ and a sequence $t_j\in (T-\rho_j^2,T)$ such that 
\begin{equation}\label{N1e2}
\int_{M_{t_j}\cap B_{\rho_j}(0)}x_{n+1}^2 d \Hm{n}>\e \rho_j^{n+1} \hbox{ for each } j\in \N.
\end{equation}
Define now $\tilde{\phi}:\R^{n+1} \rightarrow \R$ by $\tilde{\phi}(x)=x_{n+1}^2$, and choose $\psi \in C_C^{\infty}(B_2(0))$ such that 
$$\psi\geq 0, |D\psi|\leq 4, \hbox{ and } \phi \equiv 1 \hbox{ on } B_1(0).$$
Then $\phi:=\psi\tilde{\phi}\in C_C^1(B_2(0))$ and $|D\phi|\leq 16$.

Define further $\lambda_j=\rho_j$ and $s_j=\lambda_j^{-2}(T-t_j) \in (-1,0)$. Then clearly, as $T_0M_T=\R^n$ and $\phi=0$ on $\R^n$, 
$$\lim_{j\rightarrow \infty}\int_{\lambda_j^{-1}M_T}\phi d \Hm{n}=0.$$
Also, by reversing the parabolic change of variables centred around $(x_0,T)$ and by ($\ref{N1e2}$), we see that 
\begin{eqnarray}
\int_{M_{s_j}^{\lambda_j}}\phi   \geq  \int_{M_{s_j}^{\lambda_j}\cap B_1(0)}x_{n+1}^2  
 =  \int_{M_{t_j}\cap B_{\rho_j}(0)}\rho_j^{-n-2}x_{n+1}^2  
 >  \e, \nonumber
\end{eqnarray}
where the integrals are taken with respect to $\Hm{n}$, for each $j\in \N$. Thus
\begin{equation}\label{N1e3}
\left| \int_{M_{s_j}^{\lambda_j}}\phi d\Hm{n} - \int_{\lambda_j^{-1}M_T} \phi d\Hm{n}\right|>\frac{\e}{2}
\end{equation}
for all sufficiently large $j\in \N$.

However, as $x_0 \in \calG \subset G_T^{\alpha}$ with $\alpha = \e 2^{-n-2}(1+32\sqrt{A})$, we see, by Lemma $\ref{integralest}$, that 
$$\left|\int_{M_{s_j}^{\lambda_j}}\phi d\Hm{n} -\int_{\lambda_j^{-1}M_T}\phi d\Hm{n}\right|\leq 2\alpha (1+2\sqrt{A}16)2^n < \frac{\e}{2}.$$
This contradiction to ($\ref{N1e3}$) proves the result.
\end{proof}

Lemma $\ref{N1}$ gives height estimates in a weak, that is, integral sense. We need to deduce strong height estimates, that is estimates on the supremum of the height of all points. We make this deduction by combining Lemma $\ref{N1}$ and the clearing out Lemma which we recall below. The idea being that, by Lemma $\ref{N1}$, any points with large heights will be part of a narrow peak with little surface area; the clearing out Lemma then ensures that such points quickly recede from the summit, so that a short time later, no points have large height.

\begin{Lemma}\label{clearingout} 
There exists a constant $\kappa_n=\kappa_n(\kappa_{\Sigma},n)$ such that if $\cl{M}=(M_t)_{t\in [0,T)}$ is a $MCF(N,T)$ and $\cl{M}\rightarrow_{t_0}x_0$ for some $t_0\in (0,T]$ and $x_0\in \R^{n+1}$, then, for any $\beta\in (0,1/2n)$ there exists  a constant $\theta = \theta(n,\beta)\in (0,1/2)$ such that for all $\rho\in (0,\kappa_n]$
$$\rho^{-n}\Hm{n}(M_{t_0-\beta\rho^2}\cap B_{\rho}(x_0))\geq \theta.$$
Equivalently, if for some $\rho \in (0,\kappa_n)$ and $\beta \in (0,1/2n)$
$$\rho^{-n}\Hm{n}(M_{t_0-\beta\rho^2}\cap B_{\rho}(x_0))< \theta,$$
then there exists $\e>0$ such that 
$$M_t \cap B_{\e}(x_0)=\emptyset$$
for all $t\in (t_0-\e^2,t_0)$. That is, $\cl{M}\not\rightarrow_{t_0}x_0$.
\end{Lemma}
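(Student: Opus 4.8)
The plan is to establish the logically equivalent statement: to produce $\kappa_n=\kappa_n(n,\kappa_\Sigma)$ and, for each $\beta\in(0,1/2n)$, a threshold $\theta=\theta(n,\beta)\in(0,1/2)$, such that if $\cl M\rightarrow_{t_0}x_0$ then $\rho^{-n}\Hm{n}(M_{t_0-\beta\rho^2}\cap B_\rho(x_0))\geq\theta$ for every $\rho\in(0,\kappa_n]$. The engine is Huisken's monotonicity formula in the Neumann free boundary setting. Writing $\rho_{(x_0,t_0)}(x,t)=(4\pi(t_0-t))^{-n/2}e^{-|x-x_0|^2/4(t_0-t)}$ for the backward heat kernel and $\Theta(x_0,t_0,r):=\int_{M_{t_0-r^2}}\rho_{(x_0,t_0)}\,d\Hm{n}$, differentiation in time produces the familiar nonpositive bulk term $-\int_{M_t}|\vec H+(x-x_0)^\perp/2(t_0-t)|^2\rho_{(x_0,t_0)}$ together with a boundary integral over $\partial M_t$. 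Here the Neumann condition does the essential work: along $\partial M_t$ the inward conormal of $M_t$ coincides with $\pm\nu_\Sigma$, so the boundary integrand collapses to $\pm\langle x-x_0,\nu_\Sigma\rangle/2(t_0-t)$, which for $x_0\in\Sigma$ (the case needed in the application, since $x_0\in\partial M_T\subseteq\Sigma$; the remaining centres $x_0\notin\Sigma$ either see no boundary below scale $\kappa_n$ or are handled by re-centring on a nearby point of $\Sigma$) is bounded by $C\kappa_\Sigma|x-x_0|^2/(t_0-t)$ thanks to the rolling-ball and curvature hypotheses on $\Sigma$. Hence $r\mapsto e^{C\kappa_\Sigma r}\Theta(x_0,t_0,r)$ is nondecreasing on $(0,\kappa_n]$ once $\kappa_n$ is small enough (in terms of $n,\kappa_\Sigma$) for this error to be benign; this fixes $\kappa_n$. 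Equivalently one may reflect $M_t$ across $\Sigma$ inside $B_{\kappa_n}(x_0)$ --- legitimate below scale $1/\kappa_\Sigma$ by the rolling-ball condition, and yielding a hypersurface that evolves by mean curvature up to a $\kappa_\Sigma$-controlled error because of the orthogonal intersection --- and invoke the interior monotonicity formula.

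Next I would use that a reached point carries a definite Gaussian density. If $\cl M\rightarrow_{t_0}x_0$, then $\lim_{r\searrow0}\Theta(x_0,t_0,r)\geq c_0$ for a universal $c_0\in(0,1]$: taking $t_j\nearrow t_0$ and $x_j\in M_{t_j}$ with $x_j\to x_0$, and using smoothness of $M_{t_j}$ together with the monotonicity of Step 1 (comparing the kernels centred at $(x_j,t_j)$ and at $(x_0,t_0)$), one finds the density is at least that of a half-hyperplane, i.e. $c_0=1/2$ up to a negligible $\kappa_\Sigma$-correction absorbed by shrinking $\kappa_n$. Combined with the monotonicity of Step 1, $\Theta(x_0,t_0,r)\geq c_0-C\kappa_\Sigma r\geq c_0/2$ for all $r\in(0,\kappa_n]$. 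Now apply this at the scale $r=\sqrt{\beta}\,\rho$, which since $\beta<1/2n$ satisfies $r<\rho$ and $r\leq\kappa_n$: thus $\int_{M_{t_0-\beta\rho^2}}\rho_{(x_0,t_0)}(\cdot,t_0-\beta\rho^2)\,d\Hm{n}\geq c_0/2$. Splitting this integral over $B_\rho(x_0)$ and its complement and using $\rho_{(x_0,t_0)}(\cdot,t_0-\beta\rho^2)\leq(4\pi\beta\rho^2)^{-n/2}$ on $B_\rho(x_0)$ gives $(4\pi\beta)^{-n/2}\rho^{-n}\Hm{n}(M_{t_0-\beta\rho^2}\cap B_\rho(x_0))+(\mathrm{tail})\geq c_0/2$, so the desired lower area bound follows with $\theta=\tfrac14(4\pi\beta)^{n/2}c_0\in(0,1/2)$ provided the tail is at most $c_0/4$.

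Controlling that tail is the delicate point. Outside $B_\rho(x_0)$ the kernel at time $t_0-\beta\rho^2$ is pointwise exponentially small, $\leq\rho^{-n}(4\pi\beta)^{-n/2}e^{-1/4\beta}$, but under the present hypotheses there is no a priori bound on $\Hm{n}(M_{t_0-\beta\rho^2})$ against which to integrate it. To get around this I would run the monotonicity argument from the start with a spatial cutoff $\varphi$ supported in $B_\rho(x_0)$ and equal to $1$ on $B_{\rho/2}(x_0)$ --- i.e. use a localized, Ecker-type monotonicity formula --- so that only mass inside $B_\rho(x_0)$ ever enters the weighted density; the resulting cutoff error is supported in $B_\rho(x_0)\setminus B_{\rho/2}(x_0)$, where the kernel is again exponentially small in $1/\beta$ over the short interval $[t_0-\beta\rho^2,t_0]$, and it is absorbed by Brakke's local isoperimetric and ODE estimate (as in \cite{brakke} and its free-boundary adaptation in \cite{koeller4}), which bounds the growth of the localized mass over that interval without any global area input. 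With this in place the computation of the previous paragraph goes through with $\theta=\theta(n,\beta)$ as above.

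The main obstacle is precisely this localization: making the error terms in the localized free-boundary monotonicity formula uniformly controlled --- no a priori area bound, and the support surface entering only through the scale $\kappa_n$ and never through $\theta$ --- which forces one to combine the monotonicity formula with Brakke's isoperimetric argument rather than use monotonicity alone. By contrast, Steps 1 and 2 are routine adaptations of the no-boundary theory of Brakke and Ecker; the one genuinely new ingredient is the collapse of the monotonicity boundary term under the orthogonality condition $\langle\nu,\nu_\Sigma\rangle=0$.
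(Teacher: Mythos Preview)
Your proposal is correct and follows essentially the same route that the paper invokes: the paper does not give its own argument here but defers to Brakke, Ecker (Proposition~4.23 in \cite{ecker1}), and the free-boundary adaptation in \cite{koeller4}, and your sketch is precisely that adaptation --- Huisken/Buckland monotonicity with the Neumann boundary term absorbed via the $\kappa_\Sigma$-controlled bound on $\langle x-x_0,\nu_\Sigma\rangle$, a unit (or half) density lower bound at reached points, and Brakke's localized isoperimetric/ODE argument to kill the tail without a global area bound. The only place where your outline is slightly loose is the treatment of centres $x_0\notin\Sigma$ that are nonetheless close to $\Sigma$: ``re-centring on a nearby point of $\Sigma$'' is not quite the standard device (one rather uses the reflected flow or Buckland's perturbed test function so that the boundary term retains the right sign for \emph{all} $x_0$ in a $\kappa_\Sigma^{-1}$-neighbourhood of $\Sigma$), but this is a routine fix and does not affect the structure of your argument.
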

\begin{Remark}
The clearing out Lemma for flows without boundary is due to Brakke \cite{brakke}. A proof that is directly applicable to the interior points in our case can be found in Proposition 4.23 in Ecker \cite{ecker1}. The proof for the case that $x_0\in \partial M_T$ is identical to that given for flows with solid boundary as given in Corollary 6.10 in Koeller \cite{koeller4}. Clearly, by taking the minimum of the constants $\theta$ found in the proof of the interior case and the boundary case, we can find a constant $\theta=\theta(n,\beta)$ that holds in both cases.
\end{Remark}
\begin{Lemma}\label{N2}
Let $0<c_0<1/2$ (where $\kappa_n$ is as stated in Lemma $\ref{clearingout}$), then there exists an $\e_0>0$ such that for any $MCF(T,N)$, $\cl{M}=(M_t)_{t\in [0,T)}$, satisfying the area continuity and unit density hypothesis, and any $x_0 \in M_{T}$ for which $T_{x_0}M_T$ exists and
\begin{equation}\label{N2e01}
\sup_{t\in (T-\rho^2,T)}\int_{M_t\cap B_{\rho}(x_0)}\left|\pi_{T_{x_0}^{\perp}M_T}(x-x_0)\right|^2d\Hm{n} < \e_0 \rho^{n+2}
\end{equation}
for all $0<\rho \leq \rho_0< \min\{\kappa_n, T^{1/2}\}$, we have
\begin{equation}\label{N2e02}
\sup_{t\in [T-\rho^2/4,T]}\sup_{x\in M_t\cap \overline{B_{\rho /2}(x_0)}}\left|\pi_{T_{x_0}^{\perp}M_T}(x-x_0)\right|^2 \leq c_0^2\rho^2
\end{equation}
for all $\rho\in (0,\rho_0]$.
\end{Lemma}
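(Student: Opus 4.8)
The plan is to argue by contradiction, running a blow-up argument and exploiting the clearing-out Lemma (Lemma \ref{clearingout}). Suppose the conclusion fails for some value $c_0 \in (0,1/2)$: then for every choice of $\e_0 = 1/k$ there is a flow $\cl{M}^k$, a point $x_0^k$ (which after translation we take to be $0$, with $T_{x_0^k}M_T^k = \R^n$), a radius $\rho_0^k < \min\{\kappa_n, (T^k)^{1/2}\}$, and a radius $\rho_k \in (0,\rho_0^k]$, a time $t_k \in [T^k - \rho_k^2/4, T^k]$ and a point $y_k \in M_{t_k}^k \cap \overline{B_{\rho_k/2}(0)}$ with $|\pi_{\R^{n+1-n}}(y_k)|^2 = (y_k)_{n+1}^2 > c_0^2 \rho_k^2$, while the integral smallness hypothesis (\ref{N2e01}) holds with $\e_0 = 1/k$. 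Parabolically rescale each flow by $\lambda_k = \rho_k$ about $(0, T^k)$, i.e. pass to $(M_s^{(0,T^k),\rho_k})$. After rescaling, (\ref{N2e01}) becomes a uniformly-in-$k$ vanishing integral height bound on $B_1(0)$ over the time interval $s \in (-1,0)$, the tangent plane at the (rescaled) origin is still $\R^n$, there is a rescaled time $s_k \in [-1/4, 0]$ and a rescaled point $z_k = \rho_k^{-1} y_k \in \overline{B_{1/2}(0)} \cap \bar M_{s_k}$ with $(z_k)_{n+1}^2 > c_0^2$, i.e. $\mathrm{dist}(z_k, \R^n) > c_0$.

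Next I would run the clearing-out Lemma at the point $z_k$ with a fixed choice of $\beta$, say $\beta = 1/(4n)$, and a radius $r$ chosen so that the ball $B_r(z_k)$ stays inside $B_1(0)$ but is still separated from $\R^n$: concretely take $r = c_0/2$, which works since $|z_k| \le 1/2 + (\text{controlled height})$ — one checks $|z_k| < 1$ and $B_{c_0/2}(z_k)$ is disjoint from $B_{c_0/2}(\R^n)$ because $(z_k)_{n+1} > c_0$. (If $|z_k|$ is not automatically $<1$ one first uses the rescaled (\ref{N2e01}) together with a crude height bound on $z_k$ itself; but since $z_k \in \overline{B_{1/2}(0)}$ and only the $(n+1)$-component can be large, and the clearing-out constant $\kappa_n$ bounds everything, the geometry is fine after possibly shrinking $r$ to $\min\{c_0/2, \kappa_n\}$.) Since the rescaled flow reaches $z_k$ at rescaled time $s_k \le 0$, Lemma \ref{clearingout} gives
\[
r^{-n}\Hm{n}\!\left(M_{s_k - \beta r^2}^{(0,T^k),\rho_k} \cap B_r(z_k)\right) \ge \theta(n,\beta) > 0,
\]
with $\theta$ independent of $k$. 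On the other hand, all the mass in $B_r(z_k)$ sits at height at least $c_0 - r \ge c_0/2$ above $\R^n$, so
\[
\int_{M_{s_k-\beta r^2}^{(0,T^k),\rho_k} \cap B_1(0)} x_{n+1}^2 \, d\Hm{n}
\ge \left(\tfrac{c_0}{2}\right)^2 \Hm{n}\!\left(M_{s_k-\beta r^2}^{(0,T^k),\rho_k} \cap B_r(z_k)\right)
\ge \tfrac{c_0^2}{4}\,\theta\, r^n,
\]
and the time $s_k - \beta r^2$ lies in $(-1, 0)$ for $\rho_0^k$, hence $r$, small enough (this is why we require $\rho_0 < \min\{\kappa_n, T^{1/2}\}$ — it keeps the rescaled flow defined on all of $(-1,0)$). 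This lower bound is a fixed positive constant independent of $k$, contradicting the fact that the rescaled (\ref{N2e01}) forces that same integral to be $\le \e_0 = 1/k \to 0$. Choosing $\e_0$ below this constant (explicitly $\e_0 = \tfrac{c_0^2}{4}\theta(n,1/(4n))\,(\min\{c_0/2,\kappa_n\})^n / \omega_n$ or similar) yields the desired $\e_0$ in the statement.

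The main obstacle is bookkeeping the geometry of the ball $B_r(z_k)$ after rescaling: one must be sure that $B_r(z_k) \subset B_1(0)$ so that the rescaled hypothesis (\ref{N2e01}) actually controls the integral over it, that $B_r(z_k)$ is uniformly separated from the tangent plane $\R^n$ so the height factor $x_{n+1}^2$ is bounded below, and that the backward time $s_k - \beta r^2$ stays within the interval of definition $(-1,0)$ of the rescaled flow. Each of these is a matter of choosing $r$ (and thereby the threshold in the definition of $\rho_0$) small enough, but the constants must be tracked so that the final $\e_0$ depends only on $c_0$, $n$, and $\kappa_{\Sigma}$ through $\kappa_n$ and $\theta$, and not on the individual flow. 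A secondary point to handle cleanly is that $z_k \in \overline{B_{1/2}(0)}$ controls the horizontal position of $z_k$ but not a priori its height; however the clearing-out Lemma is applied at $z_k$ itself, so no a priori height bound on $z_k$ is needed — only the lower bound $(z_k)_{n+1} > c_0$ that we assumed — which is what makes the argument go through without circularity.
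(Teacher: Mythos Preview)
Your proposal is correct and follows essentially the same argument as the paper: argue by contradiction, parabolically rescale to unit scale, and apply the clearing-out Lemma with $\beta = 1/(4n)$ and radius $r = c_0/2$ at the offending high point $z_k$ to produce a uniform positive lower bound on mass at the earlier time $s_k - \beta r^2$, contradicting the vanishing integral height hypothesis. The bookkeeping concerns you flag all resolve automatically from $|z_k|\le 1/2$, $c_0<1/2$, and the assumption $\rho_0<\kappa_n$ (the last ensuring that after rescaling the clearing-out radius constraint becomes $r \le \kappa_n/\rho_k > 1 > c_0/2$, so no shrinking of $r$ is needed).
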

\begin{proof}
We may assume that $x_0=0$ and $T_{x_0}M_T=\R^n$. We note that it is sufficient to show that should ($\ref{N2e01}$) hold for some given $\rho\in (0,\rho_0]$, then ($\ref{N2e02}$) holds for that same $\rho$. Moreover, by otherwise parabolically rescaling, we may assume that $\rho=1$. We also note that under such a parabolic rescaling $c_0<1/2<1\leq \rho_0 \leq \kappa_n.$  

Suppose now that the claim is not true. Then, for each $j\in \N$, we can find a flow $\cl{M}^j:=(M_t^j)_{t\in [0,T)}$ in $MCF(N,T)$ satisfying the area continuity and unit density hypothesis such that 
$$\sup_{t\in (T-1,T)}\int_{M_t^j\cap B_{1}(0)}x_{n+1}^2d\Hm{n} < j^{-1}$$
but that 
$$\sup_{t\in [T-1/4,T]}\sup_{x\in M_t^j\cap \overline{B_{1/2}(x_0)}}x_{n+1}^2 > c_0^2.$$
For each $j\in \N$, let $t_j \in [T-1/4,T]$ be a time at which 
\begin{equation}\label{N2e1}
\sup_{x\in M_{t_j}^j \cap \overline{B_{1/2}(0)}} x_{n+1}^2>c_0^2.
\end{equation}
We now consider $y\in \overline{B_{1/2}(0)} $ with $y_{n+1}^2 \geq c_0^2$ and calculate
\begin{eqnarray}
2^nc_0^{-n}&\Hm{n}&(M_{t_j-(1/4n)(c_0/2)^2}\cap B_{c_0/2}(y)) \nonumber \\
& \leq & 2^{n+2}c_0^{-n-2}\int_{M_{t_j-(1/4n)(c_0/2)^2}\cap B_{c_0/2}(0)}x_{n+1}^2 d\Hm{n} \nonumber \\
&\leq & 2^{n+2}c_0^{-n-2}j^{-1}. \nonumber
\end{eqnarray}
Taking $j_0$ so large that $2^{n+1}c_0^{-n-2}j_0^{-1} < \theta(n,1/4n)$, where $\theta$ is as given in Lemma $\ref{clearingout}$, we deduce from Lemma $\ref{clearingout}$ that there is an $\e_y>0$ such that $M_t\cap B_{\e_y}(y)=\emptyset$ for all $t\in (t_{j_0}-\e_y^2,t_{j_0})$. In particular, we deduce that $y\not\in M_{t_{j_0}}$. By the choice of $y$ it follows that 
$$\overline{B_{1/2}(0)}\cap \{y\in \R^{n+1}:y_{n+1}^2\geq c_0^2\}\cap M_{t_{j_0}}=\emptyset$$
contradicting ($\ref{N2e1}$). The result follows.
\end{proof}
A simple extension to Lemma $\ref{N2}$ states that the result can be formulated to hold with the centre of the surface being estimated permitted to be taken anywhere within some small neighbourhood of $(x_0,T)$.
\begin{Proposition}\label{N4}
Let $\cl{M}:=(M_t)_{t\in [0,T)}$ be a $MCF(N,T)$ and $x_0 \in M_T$.
If 
\begin{equation}\label{N4e1}
\sup_{t\in [T-\rho^2,T]}\sup_{x\in M_t \cap \overline{B_{\rho}(x_0)}}|\pi_{T_{x_0}^{\perp}M_T}(x-x_0)|^2 < \e_0\rho^2 \hbox{ for all } \rho\in (0,\rho_0]
\end{equation}
for some $\e_0>0$ and $0<\rho_0<T^{1/2}$, then
\begin{equation}\label{N4e2}
\sup_{t\in [\tau-\rho^2/4,\tau]}\sup_{x\in M_t \cap \overline{B_{\rho/2}(y)}}|\pi_{T_{x_0}^{\perp}M_T}(x-y)|^2 < 4\e_0\rho^2 
\end{equation}
for all $y\in M_{\tau}\cap \overline{B_{\rho/2}(x_0)}, \tau \in [T-\rho^2/4,T]$ and $\rho \in (0,\rho_0]$.
\end{Proposition}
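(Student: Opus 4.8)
The plan is to deduce Proposition~\ref{N4} directly from Lemma~\ref{N2}, or more precisely from its conclusion~(\ref{N2e02}), by a simple inclusion-of-balls argument combined with the triangle inequality in the normal direction. The point is that the hypothesis~(\ref{N4e1}) is a pointwise height bound centred at $x_0$, valid at all scales $\rho\in(0,\rho_0]$; we want a height bound centred at a nearby point $y\in M_\tau\cap\overline{B_{\rho/2}(x_0)}$, and we must pay a factor of $4$ in the constant to absorb both the shift of the centre and the enlargement of the ball.

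First I would fix $y$, $\tau$ and $\rho$ as in the statement. The key geometric observation is that $\overline{B_{\rho/2}(y)}\subset \overline{B_{\rho}(x_0)}$, since $|y-x_0|\le\rho/2$, and likewise the time interval $[\tau-\rho^2/4,\tau]$ is contained in $[T-\rho^2,T]$ because $\tau\le T$ and $\tau-\rho^2/4\ge T-\rho^2/4-\rho^2/4 = T-\rho^2/2 \ge T-\rho^2$. Hence for any $x\in M_t\cap\overline{B_{\rho/2}(y)}$ with $t$ in the shifted interval, $x$ also lies in $M_t\cap\overline{B_{\rho}(x_0)}$ with $t\in[T-\rho^2,T]$, so~(\ref{N4e1}) applies and gives $|\pi_{T_{x_0}^{\perp}M_T}(x-x_0)|^2<\e_0\rho^2$. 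Applying~(\ref{N4e1}) a second time at the point $y\in M_\tau\cap\overline{B_{\rho/2}(x_0)}\subset M_\tau\cap\overline{B_\rho(x_0)}$ gives $|\pi_{T_{x_0}^{\perp}M_T}(y-x_0)|^2<\e_0\rho^2$ as well.

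Next I would combine these two estimates. Since $\pi_{T_{x_0}^{\perp}M_T}$ is linear, $\pi_{T_{x_0}^{\perp}M_T}(x-y)=\pi_{T_{x_0}^{\perp}M_T}(x-x_0)-\pi_{T_{x_0}^{\perp}M_T}(y-x_0)$, and the triangle inequality gives
\[
\left|\pi_{T_{x_0}^{\perp}M_T}(x-y)\right|
\le \left|\pi_{T_{x_0}^{\perp}M_T}(x-x_0)\right|+\left|\pi_{T_{x_0}^{\perp}M_T}(y-x_0)\right|
< 2\sqrt{\e_0}\,\rho,
\]
so that $|\pi_{T_{x_0}^{\perp}M_T}(x-y)|^2<4\e_0\rho^2$, which is exactly~(\ref{N4e2}). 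Taking the supremum over $x\in M_t\cap\overline{B_{\rho/2}(y)}$ and over $t\in[\tau-\rho^2/4,\tau]$ completes the argument, since the bound is uniform in $x$ and $t$.

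There is essentially no serious obstacle here; the only points requiring care are the two elementary containments — that the smaller space-time cylinder centred at $(y,\tau)$ sits inside the larger one centred at $(x_0,T)$ — which follow from $|y-x_0|\le\rho/2$ and $\tau\le T$ as checked above, and the bookkeeping of the constant, where splitting the displacement $x-y$ through $x_0$ and applying the triangle inequality in the one-dimensional normal line costs the factor $4$ in the squared quantity. One should also note that the hypothesis~(\ref{N4e1}) is assumed to hold at \emph{every} scale $\rho\in(0,\rho_0]$, which is what makes it legitimate to invoke it at the single fixed scale $\rho$ appearing in the conclusion; no rescaling of the flow is needed for this particular step.
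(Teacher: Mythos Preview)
Your argument is correct and essentially identical to the paper's own proof: both check the space--time cylinder inclusion $\overline{B_{\rho/2}(y)}\times[\tau-\rho^2/4,\tau]\subset\overline{B_{\rho}(x_0)}\times[T-\rho^2,T]$, apply the hypothesis~(\ref{N4e1}) separately to $x$ and to $y$, and then combine via $(a+b)^2\le 4\max\{a^2,b^2\}$ (paper) or, equivalently, the triangle inequality and squaring (your version). One small slip: your opening sentence says you will deduce the proposition from Lemma~\ref{N2} and its conclusion~(\ref{N2e02}), but in fact you never use Lemma~\ref{N2} --- the entire argument rests only on the hypothesis~(\ref{N4e1}) of the proposition itself, which is exactly how it should be.
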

\begin{proof}
We may assume, as in the preceding results, that $x_0=0$ and $T_{x_0}M_T=\R^n$. Furthermore it is sufficient to show that if ($\ref{N4e1}$) holds for some $\rho\in (0,\rho_0]$, then ($\ref{N4e2}$) holds for that same $\rho$. Suppose now that $\tau \in [T-\rho^2/4,T]$, $y\in M_T\cap \overline{B_{\rho/2}(0)}$ and that $x\in M_{t_x} \cap \overline{B_{\rho/2}(y)}$ for some $t_x\in [T-\rho^2/4,T]$.

Then
$$(x-y)_{n+1}^2\leq x_{n+1}^2+y_{n+1}^2 + 2x_{n+1}y_{n+1} \leq 4(\max\{x_{n+1}^2,y_{n+1}^2\}).$$
Noting that $x,y\in \overline{B_{\rho/2}(0)}$ and that 
$$t_x,\tau \in [T-2\rho^2/4 ,T] \subset [T-\rho^2/2, T],$$ 
we deduce from ($\ref{N4e1}$) that $\max\{x_{n+1}^2,y_{n+1}^2\}\leq \e_0\rho^2$ and the result follows.
\end{proof}
Using the above results, we can now show that local curvature bounds exist around points in $\partial M_T \cap R_{II} \cap \calG$. We present the estimate in the following lemma and corollary, which are based on the local regularity theorem presented as Theorem 5.7 in Ecker \cite{ecker1}.

\begin{Lemma}\label{N5}
There exist $\e_0, c_0>0$ such that the following holds.

Suppose that $\cl{M}=(M_t)_{t\in [0,T]}$ is a $MCF(N,T)$ for which
$$\sup_{t\in [\tau-\rho^2,\tau]}\sup_{x\in M_t\cap \overline{B_{\rho}(y)}}x_{n+1}^2 \leq \e_0\rho^2$$
for each $\tau\in [T-\rho^2,T]$, $y\in \overline{B_{\rho}(0)}$ and $\rho\in (0,\rho_0]$ for some $\rho_0^2<T/2$.

Then
$$\sup_{t\in [T-\rho_0^2/4,T]} \sup_{x\in M_t\cap \overline{B_{\rho_0/2}(0)}}|A_{M_t}(x)|^2\leq c_0\rho_0^{-2}.$$
\end{Lemma}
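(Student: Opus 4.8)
The plan is to view this as the free boundary analogue of Ecker's interior local regularity theorem (Theorem 5.7 in \cite{ecker1}) and to reduce to it by reflecting the flow across the support surface. After a parabolic rescaling we may assume $\rho_0=1$ (so $1<T^{1/2}$); this also scales the support surface to one with second fundamental form of size $\rho_0\kappa_\Sigma$, which by taking $\rho_0$ small we may assume as small as we like. After a rigid motion we may assume the point to be estimated is the origin with $T_{x_0}M_T=\R^n$, so the hypothesis becomes $\sup_{t\in[\tau-\rho^2,\tau]}\sup_{x\in M_t\cap\overline{B_\rho(y)}}x_{n+1}^2\le\e_0\rho^2$ for all $\tau\in[T-\rho^2,T]$, $y\in\overline{B_\rho(0)}$ and $\rho\in(0,1]$. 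Split $\overline{B_{1/2}(0)}$ into the region where $M_t$ stays away from $\Sigma$ and the region near $\partial M_t=M_t\cap\Sigma$. On the former, $(M_t\cap B_\rho(z))$ is, for small $\rho$, a mean curvature flow without boundary satisfying the same slab bound, and the curvature estimate there is exactly Ecker's Theorem 5.7; its hypotheses hold because the required mass ratio bounds follow from Lemma~\ref{clearingout} (the lower bound, applied as in Corollary 6.10 of \cite{koeller4}) and from Huisken's monotonicity formula for the smooth flow $(M_t)_{t<T}$ (the upper bound). It remains to bound $|A_{M_t}|$ near $\partial M_t$.

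Near a boundary point of $M_t$ in $\overline{B_{1/2}(0)}$, the orthogonality $\langle\nu_{M_t},\nu_\Sigma\rangle=0$ combined with the fact that $M_t$ lies in a slab of width $O(\sqrt{\e_0})$ about $\R^n$ (hence $\nu_{M_t}$ is within $O(\sqrt{\e_0})$ of $\pm e_{n+1}$) forces $\nu_\Sigma$ there to lie within $O(\sqrt{\e_0}+\rho_0\kappa_\Sigma)$ of $e_{n+1}^{\perp}$; thus near such points $\Sigma$ is $C^1$-close to an $n$-plane $\Pi$ containing $e_{n+1}$, and after a rotation fixing $e_{n+1}$ we take $\Pi=\mathrm{span}\{e_1,\dots,e_{n-1},e_{n+1}\}$. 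Using the $C^3$ bounds of Definition~\ref{freebdrysupport} and the rolling ball condition, write $\Sigma$ over this part of $B_1(0)$ as a $C^3$ graph over $\Pi$ with small $C^2$ norm, and straighten it by a $C^3$ diffeomorphism $\Phi$ with $\Phi(\Sigma)\subset\Pi$ and $\|D\Phi-\mathrm{Id}\|+\|D^2\Phi\|$ small. Put $\widetilde M_t:=\Phi(M_t)$; the Neumann condition becomes orthogonality of $\widetilde M_t$ to $\Pi$ along its boundary, so the even reflection $\widehat M_t:=\widetilde M_t\cup\sigma_\Pi(\widetilde M_t)$ is a hypersurface without boundary, which by that orthogonality is $C^{1,1}$ (in fact locally smooth) and solves a flow $\partial_t x=\vec H(x)+E(x,t)$ whose error term $E$, coming from the non-flatness of $\Sigma$ and from $D\Phi\neq\mathrm{Id}$, has $|E|+|\nabla E|$ small. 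Since $\sigma_\Pi$ fixes $e_{n+1}$ and $\Phi$ is $C^2$-close to the identity, $(\widehat M_t)$ still satisfies a slab bound of the same form with $\e_0$ replaced by a controlled multiple of $\e_0$.

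I would then apply the lower order perturbation of Ecker's theorem to $(\widehat M_t)$: with $\e_0$ and $\rho_0\kappa_\Sigma$ small and the mass ratio bounds as before, this gives $\sup_{t\in[T-1/16,T]}\sup_{x\in\widehat M_t\cap\overline{B_{1/4}(0)}}|A_{\widehat M_t}(x)|^2\le c_0$. Undoing $\Phi$ changes $A$ by a factor $1+o(1)$ and an additive $o(1)$ term, and restricting to $M_t$ and rescaling back from $\rho_0=1$ yields $\sup_{t\in[T-\rho_0^2/4,T]}\sup_{x\in M_t\cap\overline{B_{\rho_0/2}(0)}}|A_{M_t}(x)|^2\le c_0\rho_0^{-2}$, with $c_0=c_0(n,\kappa_\Sigma)$ (one reaches the $\overline{B_{\rho_0/2}}$ scale of the statement from the $\overline{B_{1/4}}$ scale of the reflection argument by first shrinking $\rho_0$, which is allowed since the estimate is only claimed for small $\rho_0$).

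The main obstacle is the reflection step: one must check that $(\widehat M_t)$ is regular enough to feed into Ecker's argument and, crucially, that its evolution differs from mean curvature flow only by terms controlled by $\rho_0\kappa_\Sigma$, so that Ecker's passage from the slab bound to graphicality and then --- via interior parabolic estimates --- to the curvature bound survives the perturbation. This is precisely where the $C^3$ control on $\Sigma$ in Definition~\ref{freebdrysupport} enters, and the bookkeeping parallels the boundary case of the clearing out lemma, Lemma~\ref{clearingout}, cf. \cite{koeller4}. An alternative route avoiding the reflection is to use the scale invariant slab bound together with Lemma~\ref{clearingout} in a compactness argument to show that near $0$ the set $M_t$ is a graph over $\R^n$ with arbitrarily small gradient meeting $\Sigma$ in a Neumann condition, and then apply interior and oblique boundary parabolic Schauder estimates to the graph function; the obstacle there is the graphicality step, i.e. ruling out thin necks or folded sheets by the clearing out lemma.
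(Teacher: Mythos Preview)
Your reflection approach is workable in outline but takes a much more laborious route than the paper. The paper does not reduce to the boundaryless case at all: it simply re-runs Ecker's own point-picking blow-up argument directly for MCF$(N,T)$. Assuming the conclusion fails gives a sequence $\mathcal{M}^j$ of such flows with slab bound $j^{-2}$ at unit scale but $\sup|A|^2\to\infty$ on $\overline{B_{1/2}}\times[T-1/4,T]$; one picks $(y_j,\tau_j,\sigma_j)$ nearly maximizing $\sigma^2|A|^2$ over concentric parabolic regions, rescales by $\lambda_j=|A_{M^j_{\tau_j}}(y_j)|^{-1}\to 0$, and obtains flows $\tilde{\mathcal{M}}^j$ with $|A_{\tilde M^j_0}(0)|=1$, $|A|\le 4$ on arbitrarily large regions, and $|x_{n+1}|\le R/j$ on $\tilde M^j_s\cap\overline{B_R}$. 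Any Arzel\`a--Ascoli sublimit lies in $\{x_{n+1}=0\}$, hence is flat, contradicting $|A(0)|=1$. The point is that this argument is \emph{indifferent} to whether $y_j$ is an interior or a boundary point: only the slab bound and the uniform curvature bound enter, and both survive the rescalings regardless. No reflection, no straightening of $\Sigma$, no perturbative version of Ecker's theorem, and no separate interior/boundary split are needed.

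Your route also carries a loose end: you write that ``by taking $\rho_0$ small'' the rescaled support surface may be assumed nearly flat, but $\rho_0$ is part of the hypothesis (any $\rho_0^2<T/2$), not something you choose, so there is no a priori smallness of $\rho_0\kappa_\Sigma$ to make the straightening error $E$ controllable. This can be patched---re-center using the freedom in $(y,\tau)$ and work at scale $\min(\rho_0,\delta(\kappa_\Sigma))$, accepting constants that depend on $\kappa_\Sigma$, which still suffices for Corollary~\ref{N6}---but it is not the lemma as stated. The paper's direct argument sidesteps the issue because the second rescaling by $\lambda_j\to 0$ flattens whatever support surface is present in the limit.
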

\begin{proof}
Suppose that the statement is not true, then there exists a sequence of flows, $\cl{M}^j:=(M_t^j)_{t\in [T-\rho_j^2,T]}$, each a $MCF(N,T)$, for which 
$$\sup_{t\in [\tau-\rho^2,\tau]}\sup_{x\in M_t^j\cap \overline{B_{\rho}(y)}}x_{n+1}^2\leq \rho^2j^{-2}$$
for all $\tau\in [T-\rho^2,T]$, $y\in \overline{B_{\rho}(0)}$ and $\rho\in [0,\rho_j]$, but that 
$$\sup_{t\in [T-\rho_j^2/4,T]}\sup_{x\in M_t^j\cap \overline{B_{\rho_j/2}(0)}}|A_{M_t^j}(x)|^2\rho_j^2\rightarrow \infty.$$
Parabolically rescaling so that $T=0$ and $\rho_j=1$ for each $j\in \N$, we have a sequence $\cl{M}^j=(M_t^j)_{t\in [-1,0]}$, each a $MCF(N,T)$, for which
\begin{equation}\label{N5e1}
\sup_{t\in [\tau-\rho^2,\tau]}\sup_{x\in M_t^j\cap \overline{B_{\rho}(y)}}x_{n+1}^2\leq \rho^2j^{-2}
\end{equation}
for all $\tau\in [-1,0]$, $y\in \overline{B_{1}(0)}$ and $\rho\in [0,1]$, but that 
$$\sup_{t\in [T-1/4,T]}\sup_{x\in M_t^j\cap \overline{B_{1/2}(0)}}|A_{M_t^j}(x)|^2\rightarrow \infty.$$
Since
\begin{eqnarray}
\gamma_j^2 & := & \sup_{\sigma\in [0,1]}\sigma^2 \sup_{t\in [-(1-\sigma)^2,0]}\sup_{x\in M_t^j \cap \overline{B_{1-\sigma}(0)}}|A_{M_t^j}(x)|^2 \nonumber \\
& \geq & \sup_{t\in [-1/4,0]}\sup_{M_t^j\cap \overline{B_{1/2}(0)}}|A_{M_t^j}(x)|^2,\nonumber
\end{eqnarray}
$$\lim_{j\rightarrow \infty}\gamma_j^2=\infty.$$
By the hypothesis of the smoothness of the $\cl{M}^j$ up to and including $t=0$, however, we see that $\gamma_j<\infty$ for each given $j\in \N$.

For each given $j\in \N$ we can now find  $\sigma_j\in (0,1]$, $\tau_j\in [-(1-\sigma_j)^2,0]$, and $y_j \in \overline{B_{1-\sigma_j}(0)}$ such that 
$$\gamma_j^2=\sigma_j^2|A_{M_{\tau_j}^j(y_j)}|^2.$$
We deduce that 
$$\sigma_j^2\sup_{[-(1-\sigma_j/2)^2,0]}\sup_{M_t^j \cap \overline{B_{1-\sigma_j/2}(0)}} |A_{M_t^j(x)}|^2 \leq 4\gamma_j^2$$
so that 
$$\sup_{[-(1-\sigma_j/2)^2,0]}\sup_{M_t^j \cap \overline{B_{1-\sigma_j/2}(0)}} |A_{M_t^j(x)}|^2 \leq 4|A_{M_{\tau_j}^j(y_j)}|^2,$$
and thus 
$$\sup_{[\tau_j-\sigma_j^2/4,\tau_j]}\sup_{M_t^j \cap \overline{B_{\sigma_j/2}(y_j)}} |A_{M_t^j(x)}|^2 \leq 4|A_{M_{\tau_j}^j(y_j)}|^2,$$
as
$$\overline{B_{\sigma_j/2}(y_j)}\times [\tau_j-\sigma_j^2/4,\tau_j]\subset \overline{B_{1-\sigma_j/2}(0)} \times[-(1-\sigma_j/2)^2,0].$$
Now let $\lambda_j=|A_{M_{\tau_j}}^j(y_j)|^{-1}$ and define  $(\tilde{\cl{M}}^j)=(\tilde{M}^j_s)_{s\in [-\lambda_j^{-2}\sigma_j^2/4,0]}$ to be the parabolic rescaling of the flow $(M_t^j)_{t\in [\tau_j-\sigma_j^2/4,\tau_j]}$ by a factor of $\lambda_j$ around $(y_j,\tau_j)$.

Then, for each $j\in \N$, $(\tilde{M}^j_s)$ is a $MCF(N,T)$ satisfying
\begin{equation}\label{N5e2}
0\in \tilde{M}^j_0, \ \ |A_{\tilde{M}_0^j(0)}|=1
\end{equation}
and 
\begin{equation}\label{N5e3}
\sup_{s\in[-\lambda_j^{-2}\sigma_j^2/4,0]}\sup_{x\in\tilde{m}_s^j \cap \overline{B_{\lambda_j^{-1}\sigma_j/2}(0)}}|A_{\tilde{M}_s^j}(x)|^2\leq 4.
\end{equation}
Since $\lambda_j^{-2}\sigma_j^2=\gamma_j^2 \rightarrow \infty$ we deduce that 
$$\sup_{s\in [-R^2,0]}\sup_{x\in \tilde{M}_s^j \cap \overline{B_R(0)}}|A_{\tilde{M}_s^j}(x)|^2 \leq 4$$
for each $R>0$ and sufficiently large $j$ depending on $R$. Parabolically rescaling inequality ($\ref{N5e1}$) around $(y_j,\tau_j)$ by a factor of $\lambda_j$, we get 
$$\sup_{s\in [-\lambda_j^{-2}\rho^2,0]}\sup_{x\in \tilde{M}_s^j \cap \overline{B_{\lambda_j}^{-1}\rho}}|\lambda_jx_{n+1}|\leq \rho j^{-1}$$
for each $j\in \N$ and $\rho\in (0,1]$.

For fixed $R>0$, we set $\rho=R\lambda_j$. As $\lambda_j \rightarrow 0$ as $j\rightarrow \infty$, we see that $\rho<1$ for sufficiently large $j$ and thus 
\begin{equation}\label{N5e4}
\sup_{s\in [-R^2,0]}\sup_{x\in \tilde{M}_s^j \cap \overline{B_R(0)}}|x_{n+1}| \leq Rj^{-1}
\end{equation}
for such $j$.

The curvature estimates in ($\ref{N5e2}$) and ($\ref{N5e3}$) imply, by the Arzela-Ascoli Theorem, that we may take a smooth limit of $\tilde{M}^j$ to find a $MCF(N,T)$, $(M_s^{\prime})_{s\leq 0}$ satisfying 
\begin{equation}\label{N5e5}
0\in M_0^{\prime}, \ \ |A_{M_0^{\prime}}(0)|=1,
\end{equation}
and $|A_{M_s^{\prime}}(y)|^2\leq 4$ for all $s\leq 0$ and $y\in M_s^{\prime}$. However, by ($\ref{N5e4}$) we also have $|x_{n+1}|=0$ for all $x\in M_s^{\prime}$ and $s\leq 0$. Thus $M_s^{\prime}=\R^n$ for each $s\leq 0$ and hence $|A_{M_0^{\prime}}(0)|=0$. This contradiction to ($\ref{N5e5}$) proves the result.
\end{proof}

\begin{Corollary}\label{N6}
Let $\cl{M}=(M_t)_{t\in [0,T)}$ be a $MCF(N,T)$ satisfying the area continuity and unit density hypothesis and let 
$$x_0\in \partial M_T \cap R_{II} \cap \calG.$$ 
Then there exists a radius $\rho_0>0$ and a constant, $c_1$, such that 
$$\sup_{t\in (T-\rho_0^2,T)}\sup_{x\in M_t \cap B_{\rho_0}(x_0)}|A_{M_t}(x)|^2 \leq c_1\rho_0^{-2}.$$
\end{Corollary}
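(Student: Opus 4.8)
As in the preceding results we may translate and rotate so that $x_0=0$ and $T_{x_0}M_T=\R^n$, so that the quantity to be controlled is $x_{n+1}^2$. The plan is simply to run the three preceding results in series. At the good point $x_0$, Lemma~\ref{N1} supplies an integral height-excess decay; Lemma~\ref{N2}, via the clearing-out Lemma, upgrades this to a pointwise ``thin slab'' bound, i.e. near $(x_0,T)$ the surfaces $M_t$ stay within $\e_0$-distance of $T_{x_0}M_T$ on every parabolic cylinder of radius $\sigma$ and temporal length $\sigma^2$; and Lemma~\ref{N5} converts a thin-slab bound of exactly this shape into the asserted curvature estimate. Proposition~\ref{N4} is inserted in between, to translate the thin-slab bound to nearby space-time centres so that it matches the precise quantifiers in the hypothesis of Lemma~\ref{N5}.

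In more detail, let $\e_0,c_0$ be the constants of Lemma~\ref{N5}, pick $c_0'\in(0,1/2)$ small (fixed below), and let $\e_0''=\e_0''(c_0')$ be the constant Lemma~\ref{N2} attaches to it. Applying Lemma~\ref{N1} with $\e:=\e_0''$ yields $\rho_1>0$, which we shrink so that $\rho_1<\min\{\kappa_n,T^{1/2}\}$, with
$$\sup_{t\in(T-\rho_1^2,T)}\int_{M_t\cap B_\rho(0)}x_{n+1}^2\,d\Hm{n}\le\e_0''\rho^{n+2}\qquad\text{for all }\rho\in(0,\rho_1].$$
Lemma~\ref{N2} then gives $\sup_{t\in[T-\rho^2/4,T]}\sup_{x\in M_t\cap\overline{B_{\rho/2}(0)}}x_{n+1}^2\le(c_0')^2\rho^2$ for all $\rho\in(0,\rho_1]$; replacing $\rho$ by $2\rho$ recasts this, on $B_{\rho_1/2}(0)$, in the form of the hypothesis of Proposition~\ref{N4} with constant of order $(c_0')^2$. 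Proposition~\ref{N4} then delivers, after writing $\sigma=\rho/2$: for every $\sigma\in(0,\rho_1/4]$, every $\tau\in[T-\sigma^2,T]$ and every $y\in M_\tau\cap\overline{B_\sigma(0)}$,
$$\sup_{t\in[\tau-\sigma^2,\tau]}\ \sup_{x\in M_t\cap\overline{B_\sigma(y)}}x_{n+1}^2\le 64(c_0')^2\sigma^2.$$
(For a centre $y\in\overline{B_\sigma(0)}$ that happens not to lie on $M_\tau$ one uses $\overline{B_\sigma(y)}\subset\overline{B_{2\sigma}(0)}$ and the Lemma~\ref{N2} bound at scale $4\sigma$; in any case only centres on the flow are used inside the proof of Lemma~\ref{N5}.) Fixing $c_0':=\min\{1/2,\tfrac18\sqrt{\e_0}\}$ makes the right-hand side $\le\e_0\sigma^2$, which is exactly the thin-slab hypothesis of Lemma~\ref{N5} with radius any $R\le\rho_1/4$ (shrink once more so that $R^2<T/2$).

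It then remains to invoke Lemma~\ref{N5}. The one point needing care is that Lemma~\ref{N5} is stated for flows smooth up to and including their final time --- smoothness at the endpoint being used in its proof only to ensure the auxiliary supremum $\gamma_j$ is finite --- whereas $\cl{M}$ is smooth only on $[0,T)$. I would handle this by applying Lemma~\ref{N5} to the truncated flows $(M_t)_{t\in[0,t']}$ with $t'\nearrow T$, obtaining a bound $|A_{M_t}|^2\le c_0R^{-2}$ on $\overline{B_{R/2}(0)}\times[t'-R^2/4,t']$ with $R,c_0$ independent of $t'$, and passing to the limit; relabelling $\rho_0:=R/2$, $c_1:=c_0/4$, and using $B_{\rho_0}(x_0)\subset\overline{B_{R/2}(0)}$ and $(T-\rho_0^2,T)\subset[T-R^2/4,T)$ then finishes the proof. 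The hard part, and where I expect the real work to lie, is precisely this reconciliation and the surrounding bookkeeping: re-establishing the thin-slab hypothesis required by Lemma~\ref{N5} relative to the moving endpoint $t'$ on the correct parabolic cylinders, matching the chain of rescaled thin-slab estimates to the exact quantifiers of Lemma~\ref{N5} ($y$ ranging over a full ball, $\tau$ over a full interval), and keeping the constants uniform as $t'\nearrow T$. The analytic substance --- height decay $\Rightarrow$ thin slab $\Rightarrow$ bounded curvature --- is entirely carried by Lemmas~\ref{N1}, \ref{N2} and \ref{N5}, with Proposition~\ref{N4} as the bridge supplying centres away from $x_0$ at times below $T$.
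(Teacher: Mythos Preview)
Your proposal is correct and follows essentially the same route as the paper: chain Lemma~\ref{N1} $\Rightarrow$ Lemma~\ref{N2} $\Rightarrow$ Proposition~\ref{N4} to obtain the thin-slab bound, then feed this into Lemma~\ref{N5} applied to the truncated flows ending at $t'<T$ (the paper writes $t'=T-\delta^2$ with $\delta\in(0,\rho_\e/8)$) and take the union over $t'$. Your identification of the only nontrivial point --- that Lemma~\ref{N5} assumes smoothness at the terminal time and must therefore be invoked on truncations --- and your handling of it match the paper's argument exactly, including the final relabelling $\rho_0=\rho_\e/8$, $c_1=c_0/4$.
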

\begin{proof}
Without loss of generality, we may assume that $x_0=0$ and that $T_{x_0}M_T=\R^n$. Let $0<\e_1 < (1/16)\min\{\e_0,1/2\}$, where $\e_0$ is as in Lemma $\ref{N5}$.

Then, as $x_0\in \partial M_T \cap R_{II} \cap \calG$, by Lemma $\ref{N2}$ and Proposition $\ref{N4}$ there is an $\e>0$ such that if 
\begin{equation}\label{N6e1}
\sup_{t\in (T-\rho^2,T)}\int_{M_t\cap B_{\rho}(0)}x_{n+1}^2 d\Hm{n} \leq \e\rho^{n+1}
\end{equation}
for all $\rho \in (0,2\rho_{\e}]$ for some $0<2\rho_{\e}\leq \min\{\kappa_n, T^{1/2}\}$,
$$\sup_{t\in [\tau -\rho^2/4 ,\tau]}\sup_{x\in M_t \cap \overline{B_{\rho/2}(y)}}x_{n+1}^2 < 4\e_1\rho^2<\frac{\e_0\rho^2}{4}$$
for all $y\in M_{\tau}\cap \overline{B_{\rho/2}(0)}$, $\tau \in [T-\rho^2/4,T]$ and $\rho \in (0,\rho_{\e}]$.

We deduce from Lemma $\ref{N1}$ that there is indeed a $\rho_{\e}>0$ such that ($\ref{N6e1}$) holds. It follows, for each $\delta \in (0,\rho_{\e}/8)$, that $(M_t)_{t\in [T-\delta^2 - \rho_{\e}^2/16,T-\delta^2]}$ is a $MCF(N,T)$ smooth up to and including $T-\delta^2$, which satisfies
$$\sup_{t\in [\tau-\rho^2,\tau]}\sup_{x\in M_t \cap \overline{B_{\rho}(y)}}x_{n+1}^2 < \e_0 \rho^2$$
for each  $y\in M_{\tau}\cap \overline{B_{\rho}(0)}$, $\tau\in [T-\rho^2,T]$ and $\rho\in (0,\rho_{\e}/4]$.

For each $\delta \in (0,\rho_{\e}/8)$, we infer from Lemma $\ref{N5}$ that 
$$\sup_{t\in [T-\delta^2 - \rho_{\e}^2/64 , T-\delta^2]}\sup_{x\in M_t \cap B_{\rho_{\e}/8}(0)}|A_{M_t}(x)|^2 \leq 16c_0\rho_{\e}^{-2}$$
and thus that 
$$\sup_{t\in [T-\rho_{\e}^2/64 , T)}\sup_{x\in M_t \cap B_{\rho_{\e/8}}(0)}|A_{M_t}(x)|^2 \leq 16c_0\rho_{\e}^{-2}.$$
Setting $\rho_0 = \rho_{\e}/8$ and $c_1 = c_0/4$ completes the proof.
\end{proof}
\section{Local and global regularity} 

With the local curvature bounds and the Arzela-Ascoli Theorem, we prove the regularity results, by showing that in small neighbourhoods of points in $\partial M_T \cap R_{II} \cap \calG$ we may take a limit of $M_t$ that is sufficiently smooth and, in particular, rectifiable. We deduce, with the use of Theorem $\ref{rectifiablecharacteristics1}$, that $\partial M_T$ has no $\Hm{n}$ measure in a small neighbourhood of almost all points. Using standard covering arguments, the main Theorem then follows.

\begin{Definition}\label{ptg}
Recall that $\nu_{\Sigma}$ is the inner unit normal field of $\Sigma$ with respect to $G$. We define $P_t^G$ to be the set of $x\in \partial M_t$ such that $\nu_{\Sigma}$ is the inner unit normal of $\partial M_t$ with respect to  $M_t$ and $P_t^c:=\partial M_t \sim P_t^G$.
We now define
$$M_t^G:=\overline{M_t\cap G}=(M_t\cap \overline{G})\sim P_t^c, M_t^c:=\overline{M_t\cap G^c}=(M_t \cap \overline{G^c})\sim P_t^G,$$
$$P_T^G:=\{x\in \R^{n+1}:P_t^G\rightarrow_T x\}, \hbox{ and }P_T^C:=\{x\in \R^{n+1}:P_t^c \rightarrow_Tx\}.$$
\end{Definition}
\begin{Remark}
We note that if $\cl{M}$ is a $MCF(N,S)$, then we have $M_t=M_t^G$, and hence $\partial M_t=P_t^G$ and $M_t^c=\emptyset$ for all $t\in [0,T)$. We also note that $\partial M_t = P_t^c \cap P_t^G$ for all $t\in [0,T)$ always holds.
\end{Remark}
\begin{Theorem}\label{N8}
Let $\cl{M}=(M_t)_{t\in [0,T)}$ be a $MCF(N,T)$ satisfying the area continuity and unit density hypothesis and $x_0\in \partial M_T \cap R_{II} \cap \calG$. Then there exists a $\rho_{x_0}>0$ such that 
$$\Hm{n}(B_{\rho_{x_0}}(x_0)\cap \partial M_T)=0.$$
\end{Theorem}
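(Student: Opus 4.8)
The plan is to exploit the local curvature bound of Corollary \ref{N6}: near $x_0$ and for $t$ close to $T$ it forces the flow to be a family of uniformly smooth graphs over the tangent plane $T_{x_0}M_T$ with boundary curves lying on $\Sigma$, and passing to the limit $t\nearrow T$ produces a smooth limit surface whose boundary is an $(n-1)$-dimensional submanifold, hence $\Hm{n}$-null.

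Concretely, I would first normalise $x_0=0$ and $T_{x_0}M_T=\R^n$. Corollary \ref{N6} supplies $\rho_0>0$ and $c_1<\infty$ with $\sup_{t\in(T-\rho_0^2,T)}\sup_{x\in M_t\cap B_{\rho_0}(0)}|A_{M_t}(x)|^2\leq c_1\rho_0^{-2}$, and combining Lemma \ref{N1}, Lemma \ref{N2} and Proposition \ref{N4} (applied as in the proof of Corollary \ref{N6}, possibly after shrinking $\rho_0$) one also has $\sup_{t\in(T-\rho_0^2,T)}\sup_{x\in M_t\cap B_{\rho_0}(0)}x_{n+1}^2\leq\e_0\rho_0^2$. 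A standard argument (as in Ecker \cite{ecker1}) then shows that for some $\rho_1\in(0,\rho_0]$ and each $t\in(T-\rho_1^2,T)$, $M_t\cap B_{\rho_1}(0)$ is a graph of a function $u(\cdot,t)$ over an open subset $\Omega_t\subset\R^n$, with $|Du(\cdot,t)|$ and $|A_{M_t}|$ bounded uniformly in $t$. Since $x_0\in\partial M_T$, the boundary curves $\partial M_t$ accumulate at $0$; and because $M_t\perp\Sigma$ along $\partial M_t$ with both surfaces nearly flat near $0$ and their tangent planes at $0$ orthogonal, $M_t\cap\Sigma\cap B_{\rho_1}(0)=\partial M_t\cap B_{\rho_1}(0)$ is a single uniformly smooth $(n-1)$-dimensional graph and $\partial\Omega_t$ is a $C^1$-graph along $\Sigma$. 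The Neumann condition $\la\nu_{M_t},\nu_\Sigma\ra=0$ then becomes a uniformly oblique boundary condition for $u$ on the $C^3$ hypersurface $\Sigma$.

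Next I would bootstrap: the equation for $u$ is uniformly parabolic, so the interior estimates of Ecker \cite{ecker1} together with the boundary a priori estimates of Stahl \cite{stahl1}, \cite{stahl} for the oblique boundary-value problem give uniform bounds on all space-time derivatives of $u$ on $B_{\rho_1/2}(0)\times(T-\rho_1^2/4,T)$, up to and including the part of the boundary lying on $\Sigma$. By Arzela--Ascoli, $u(\cdot,t)\to u(\cdot,T)$ in $C^\infty$ as $t\nearrow T$, so $M_t\cap B_{\rho_1/2}(0)$ converges smoothly to a smooth embedded $n$-manifold-with-boundary $\widetilde M$, with $\partial\widetilde M\subset\Sigma$ and $\widetilde M\perp\Sigma$ along $\partial\widetilde M$. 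Every point of $\widetilde M$ (resp.\ of $\partial\widetilde M$) is reached by $\cl{M}$ (resp.\ by $\partial\cl{M}$), while conversely any point reached by the flow inside $B_{\rho_1/2}(0)$ must lie on the graph limit; hence $\widetilde M=M_T\cap B_{\rho_1/2}(0)$ and $\partial\widetilde M=\partial M_T\cap B_{\rho_1/2}(0)$, consistently with the area-continuity hypothesis, which forces the limit measure near $0$ to be $\Hm{n}\restr\widetilde M$. Since $\partial\widetilde M$ is an $(n-1)$-dimensional $C^\infty$ submanifold of $\R^{n+1}$ it is $\Hm{n}$-null, so the theorem holds with $\rho_{x_0}:=\rho_1/2$.

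The step I expect to be the main obstacle is the passage to the smooth limit up to the boundary: one must verify that the uniform curvature and height bounds genuinely upgrade, through the oblique boundary-value problem for $u$ on the $C^3$ support surface, to uniform higher-order estimates in a full neighbourhood of $\partial M_t$ (this is where the traversable-boundary setting and the regularity of $\Sigma$ enter), and that the resulting smooth limit faithfully records $M_T$ and $\partial M_T$ as sets, with no stray accumulation of interior sheets of $M_t$ onto $\Sigma$ near $x_0$ that could enlarge $\partial M_T$. Away from $\partial M_t$ the argument is exactly Ecker's local regularity theorem \cite{ecker1}.
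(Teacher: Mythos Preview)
Your strategy---push the curvature bound to a smooth limit with boundary and read off $\dim\partial M_T=n-1$---is different from the paper's, and as written it has real gaps.

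The paper never tries to show that $\partial M_T$ is an $(n-1)$-manifold. Instead it argues measure-theoretically: using Corollary~\ref{N6} and Arzela--Ascoli it takes a $C^1$ limit $M_0$ of the one-sided pieces $M_t^G:=\overline{M_t\cap G}$, observes that $M_0\subset\overline G$ is countably $n$-rectifiable with locally finite $\Hm{n}$, and then shows that at every $x\in P_T^G$ the approximate tangent plane of $M_0$ \emph{cannot} exist. The reason is that the Neumann condition passes to the limit, forcing $\nu_\Sigma(x)\in T_xM_0$ were the tangent plane to exist; but then the tangent plane would carry positive mass inside the exterior ball $B_{\kappa_\Sigma^{-1}}(x-\kappa_\Sigma^{-1}\nu_\Sigma(x))\subset G^c$, while the rescalings of $M_0\subset\overline G$ put zero mass there. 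By Theorem~\ref{rectifiablecharacteristics1} this yields $\Hm{n}(P_T^G\cap B_{\rho_0/2}(x_0))=0$, and an identical argument on the $G^c$ side handles $P_T^c$. No higher regularity, no oblique-boundary Schauder theory, no graph representation is needed.

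Two of your intermediate claims are false in the traversable setting. First, the equality $M_t\cap\Sigma\cap B_{\rho_1}(0)=\partial M_t\cap B_{\rho_1}(0)$: for a $MCF(N,T)$ only the inclusion $\partial M_t\subset M_t\cap\Sigma$ is assumed, and the interior of $M_t$ may meet $\Sigma$ transversally near $x_0$. Second, the assertion that $M_t\cap B_{\rho_1}(0)$ is a \emph{single} graph: uniform $|A|$ and height bounds do not prevent several nearly parallel sheets, and the paper explicitly notes (Remark after Theorem~\ref{N8}) that the limit can be merely immersed. Both issues are exactly why the paper splits into $M_t^G$ and $M_t^c$ and stops at $C^1$ convergence rather than attempting your smooth graphical picture. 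Your bootstrapping via Stahl's boundary estimates is in fact acknowledged as feasible in the paper's remarks, but it is heavier machinery than required, and to make your argument rigorous you would still need to handle the multi-sheet and $P_T^G\cap P_T^c$ overlap cases---precisely the ``potentially troublesome argumentation'' the paper's one-sided rectifiability trick is designed to sidestep.
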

\begin{proof}By Corollary $\ref{N6}$, there exist $c_0,\rho_0>0$ such that 
\begin{equation}\label{N8e1}
\sup_{t\in [T-\rho_0^2,T)}\sup_{x\in B_{\rho_0}(x_0)}|A_{M_t}(x)|^2\leq \frac{c_0^2}{\rho_0^2}.
\end{equation}
We now consider $M_t^G \cap \overline{B_{\rho_0/2}(x_0)}$. By the curvature bounds, ($\ref{N8e1}$), we may, by considering local graph representations and using a diagonal argument to approach $\Sigma$, use the Arzela-Ascoli Theorem to conclude that 
$$B_{\rho_0/2}(x_0)\cap M_t^G \rightarrow M_0 \subset B_{\rho_0/2}(x_0) \cap G,$$
an immersed, $n$-dimensional $C^1$-manifold satisfying 
$$|A_{M_0}(x)|^2\leq c_0^2\rho_0^{-2}, B_{\rho_0/2}(x_0) \cap P_T^G \subset M_0, \hbox{ and}$$
\begin{equation}\label{N8e2}
\la \nu_{\Sigma},\nu_{M_0}\ra (x)=0 \hbox{ for all } x\in P_T^G \cap B_{\rho_0/2}(x_0).
\end{equation}
(As $M_0$ is `only' an immersed manifold, $\la \nu_{\Sigma},\nu_{M_0}\ra (x)=0$ is meant in the sense that there is a subset $M_0^s\subset M_0$ for which $\la \nu_{\Sigma},\nu_{M_0^s}\ra (x)=0$.)
Since $M_0\subset M_T$, it follows from the area continuity and unit density hypothesis that $M_0$ is a countably 
$n$-rectifiable set with locally finite $\Hm{n}$-measure.

We now consider $x\in P_T^G\cap B_{\rho_0/2}(x_0)$ and deduce from ($\ref{N8e2}$) that if $T_xM_0$ exists, then $\la T_{x_0}^{\perp}M_0,\nu_{\Sigma}(x)\ra =0$ and thus 
\begin{equation}\label{N8e3}
\nu_{\Sigma}(x)\in T_xM_0.
\end{equation}
Define $B_r^x:= B_{\kappa_{\Sigma}^{-1}}(x-r\nu_{\Sigma}(x))\subset int(G^c)$ and select 
$\varphi \in C_C^0(B_{\kappa_{\Sigma}^{-1}}^x-x)$ with $\varphi \geq 0$ and $\varphi =1$ on $B_{\kappa_{\Sigma}^{-1}/2}^x - x$.

By ($\ref{N8e3}$)
$$\int_{T_xM_0}\varphi d\Hm{n} >0.$$
However, since $B_r^x-x \subset int(G^c)-x$ and $M_0 \subset \overline{G}$
$$\int_{M_0^{x,\lambda}}\varphi d\Hm{n}=0 \hbox{ for all } \lambda >0.$$
It follows that $T_xM_0$ does not exist.

We now deduce from Theorem $\ref{rectifiablecharacteristics1}$ that $\Hm{n}(P_T^C \cap B_{\rho_0/2}(x_0))=0.$
An analogous argument shows that $\Hm{n}(P_T^c \cap B_{\rho_0/2}(x_0)) =0$
and therefore, since $\partial M_T = P_T^c\cup P_T^G$, that 
$$\Hm{n}(\partial M_T \cap B_{\rho_1}(x_0))=0$$
with $\rho_1=\rho_0/2$.
\end{proof}
\begin{Remark}
\begin{enumerate}
\item Examples can easily be constructed to show that, at least in the case of flows with traversable support surface, $M_0$ may indeed be `only' immersed instead of embedded. That is, there exist points $x\in \Sigma$ such that 
$$\partial M_t^G \sim \partial M_t \rightarrow_T x, \hbox{ and } \partial M_t \rightarrow_T x.$$
\item We may certainly also use the Arzela-Ascoli Theorem to take a limit surface of $M_t \cap B_{\rho_0/2}(x_0)$, as $t\rightarrow T$, as a whole. However, we would then need to exclude the possibility that there exist any points $x\in P_T^G \cap P_T^c$, as in this case $T_xM_0$ may, in fact, exist. Considering $M_T^G$ and $M_T^c$ separately avoids the potentially troublesome argumentation to handle this case directly.
\item That $M_0$ is countably $n$-rectifiable can also be shown without reference to the area continuity and unit density hypothesis using the $C^1$-properties of the surface. It was, however, as the area continuity and unit density hypothesis is also used elsewhere in the proof, convenient to use the hypothesis.
\item We could also, on a smaller ball, using the inner differentiability result of Stahl, \cite{stahl1}, show that the convergence is smooth to a smoothly immersed surface. The higher derivatives are, however, not necessary here.
\end{enumerate}
\end{Remark}  
As the measure of $\partial M_T$ has now been shown to be zero in a small ball around almost all points, we can now prove our main theorem, the global regularity result, Theorem $\ref{globalregularity}$, through covering arguments. Before presenting the proof, we restate the theorem for convenience.
\begin{thmglobal}
Let $\cl{M}=(M_t)_{t\in I}$ be either
\begin{enumerate}[(i)]
\item a $MCF(N,S)$ satisfying the boundary approaches boundary assumption, or
\item a $MCF(N,T)$,
\end{enumerate}
that satisfies the area continuity and unit density hypothesis. then
\begin{equation}
\Hm{n}(\partial M_T)=0 \hbox{ and }\Hm{n}(sing_T\cl{M})=0.
\end{equation}
\end{thmglobal}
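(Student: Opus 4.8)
The plan is to assemble the global regularity result from the pieces already laid out in Sections 4--6. By the Remark following Corollary~\ref{justTcase}, it suffices to prove that $\Hm{n}(\partial M_T)=0$ for an arbitrary $MCF(N,T)$, $\cl{M}=(M_t)_{t\in[0,T)}$, satisfying the area continuity and unit density hypothesis; the $MCF(N,S)$ case (i) then follows from Corollary~\ref{justTcase} together with the observation that the interior singular set is handled by Lemma~\ref{interior}. So the real content is the $MCF(N,T)$ case, and there the first reduction is to write $\Hm{n}(sing_T\cl{M})\le \Hm{n}(sing_T\cl{M}\sim\partial M_T)+\Hm{n}(\partial M_T)$ and invoke Lemma~\ref{interior} to kill the first term; hence everything comes down to $\Hm{n}(\partial M_T)=0$.

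To bound $\Hm{n}(\partial M_T)$, I would decompose $\partial M_T$ using the geometric-measure-theoretic dichotomy. First write $\partial M_T \subset (\partial M_T\sim\calG)\cup(\partial M_T\cap\calG)$; by Lemma~\ref{nobadpoints} the set $\R^{n+1}\sim\calG$ is $\Hm{n}$-null, so only $\partial M_T\cap\calG$ matters. Next, by Theorem~\ref{rectifiablecharacteristics1} applied to the countably $n$-rectifiable set $M_T$ (which is $n$-rectifiable by the area continuity and unit density hypothesis), $\Hm{n}$-almost every point of $\R^{n+1}$ lies in $R_I\cup R_{II}$, so it suffices to estimate $\Hm{n}(\partial M_T\cap R_J\cap\calG)$ for $J\in\{I,II\}$. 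For $J=I$, Theorem~\ref{pointI} gives directly that $\partial M_T\cap R_I\cap\calG=\emptyset$, hence $\Hm{n}(\partial M_T\cap R_I\cap\calG)=0$. For $J=II$, Theorem~\ref{N8} says that every $x_0\in\partial M_T\cap R_{II}\cap\calG$ has a radius $\rho_{x_0}>0$ with $\Hm{n}(B_{\rho_{x_0}}(x_0)\cap\partial M_T)=0$.

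The remaining step is a standard Vitali-type covering argument to pass from this local vanishing to $\Hm{n}(\partial M_T\cap R_{II}\cap\calG)=0$. Cover $\partial M_T\cap R_{II}\cap\calG$ by the balls $\{B_{\rho_{x_0}}(x_0)\}$; since $\partial M_T$ has locally finite $\Hm{n}$-measure (it is a subset of $M_T$), one can extract a countable subcover, or more carefully apply the Besicovitch covering theorem to get a countable subfamily of bounded overlap whose union still contains $\partial M_T\cap R_{II}\cap\calG$, and then $\Hm{n}(\partial M_T\cap R_{II}\cap\calG)\le \sum_i \Hm{n}(B_{\rho_{x_i}}(x_i)\cap\partial M_T)=0$. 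Combining, $\Hm{n}(\partial M_T\cap\calG)=0$, hence $\Hm{n}(\partial M_T)=0$, and then $\Hm{n}(sing_T\cl{M})=0$ by Lemma~\ref{interior}. Finally, for case (i), Corollary~\ref{justTcase} transfers both conclusions to the $MCF(N,S)$ setting.

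I do not expect any genuine obstacle here: all the hard analysis (the local curvature bounds and the local measure-zero statement Theorem~\ref{N8}) is already done, and the only point requiring a little care is the covering argument, specifically ensuring that the local null sets can be summed --- which needs either separability/Lindel\"of to get a countable subcover, or Besicovitch to control overlaps; since $\R^{n+1}$ is separable this is routine. The one bookkeeping subtlety worth stating explicitly is that in case (i) one must note $sing_{T}\cl{M}$ for the $MCF(N,S)$ and for its extension as an $MCF(N,T)$ need not coincide as \emph{regular sets}, but the inclusion $sing_T^S\cl{M}\subset sing_T\cl{M}\cup(\text{something }\Hm{n}\text{-null})$ together with Lemma~\ref{interior} and the computed $\Hm{n}(\partial M_{T_S})=0$ is exactly what Corollary~\ref{justTcase} already packages, so no extra work is needed.
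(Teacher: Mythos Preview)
Your proposal is correct and follows essentially the same strategy as the paper: the same reduction via Lemma~\ref{interior} and Corollary~\ref{justTcase} to the $MCF(N,T)$ case, the same decomposition of $\partial M_T$ using Lemma~\ref{nobadpoints}, Theorem~\ref{rectifiablecharacteristics1}, and Theorem~\ref{pointI}, and the same appeal to the local vanishing Theorem~\ref{N8} for the remaining piece $\partial M_T\cap R_{II}\cap\calG$.

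The only difference is in the final covering argument. The paper proceeds by first removing an open neighbourhood $U$ of the $\Hm{n}$-null ``bad'' set $D=\{x\in\partial M_T:x\notin R_{II}\text{ or }x\notin\calG\}$, covering the compact remainder $(\partial M_T\setminus U)\cap\overline{B_R(0)}$ by \emph{finitely} many of the balls from Theorem~\ref{N8}, and then estimating $\Hm{n}_\delta(\partial M_T)$ directly from the definition by combining an $\e$-efficient $\delta$-cover of $D$ with one of the complement. Your Lindel\"of/countable-subadditivity route is shorter and entirely adequate: the balls $\{B_{\rho_{x_0}}(x_0)\}_{x_0\in\partial M_T\cap R_{II}\cap\calG}$ form an open cover of a subset of the separable space $\R^{n+1}$, so a countable subcover exists without any appeal to local finiteness or Besicovitch, and then $\Hm{n}(\partial M_T\cap R_{II}\cap\calG)\le\sum_i\Hm{n}(\partial M_T\cap B_{\rho_{x_i}}(x_i))=0$ by countable subadditivity of the Hausdorff outer measure. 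The paper's more elaborate argument is not needed here, though it has the minor virtue of never invoking measurability of $R_{II}$ or $\calG$ --- a non-issue anyway since Hausdorff measure is an outer measure.
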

\begin{proof}
By Lemma $\ref{interior}$, it suffices to show that $\Hm{n}(\partial M_T)=0,$
and thus, by Corollary $\ref{justTcase}$, to show that $\Hm{n}(\partial M_T)=0$
whenever $\cl{M}$ is a $MCF(N,T)$. We therefore assume that $\cl{M}$ is a $MCF(N,T)$. 

By Lemma $\ref{nobadpoints}$ and Theorems $\ref{rectifiablecharacteristics1}$ and $\ref{pointI}$
\begin{equation}\label{N9e1}
\Hm{n}(D:=\{x\in \partial M_T:x\not\in R_{II} \hbox{ or }x \not\in \calG\})=0.
\end{equation}
Let $R>0$, $\calU$ be any open covering of $D$, and $U:=\cup\{V:V\in \calU\}$. We consider 
$$\calA(\calU,R):=(\partial M_T \sim U)\cap \overline{B_R(0)},$$ a compact set. By Theorem $\ref{N8}$, for any $x\in \calA(\calU,R)$ there exists a $\rho_x>0$ such that $\Hm{n}(\partial M_T \cap B_{\rho_x}(x))=0$.

As $\calA(\calU,R)$ is compact, we can cover $\calA(\calU,R)$ by finitely many such balls $B_{\rho_x}(x)$ to deduce that 
$$\Hm{n}(\partial M_T \cap \calA(\calU,R))=0.$$
Letting $R\rightarrow \infty$ and defining $\calA(\calU):=\cup\{\calA(\calU , R):R>0\}$ it follows that $\Hm{n}(\partial M_T \cap \calA(\calU))=0.$

Now let $\e, \delta>0$. As $\Hm{n}(D)=0$ we can find an open $\delta$-covering, $\calU_{\delta , \e}:=\{B_i\}_{i=1}^{\infty}$, of $D$ satisfying 
$$b:=\sum_{i=1}^{\infty}\omega_n2^{-n}d(B_i)^n<\e$$
(where here $\omega_n$ is the Lebesgue measure of the unit $n$-ball). Define $U_{\delta , \e}:= \cup\{U:U\in \calU_{\delta , \e}\}$. As $\Hm{n}(\partial M_T \cap \calA(U_{\delta , \e}))=0$ we may similarly find an open $\delta$-covering of $\partial M_T \cap \calA(U_{\delta , \e})$, $\cl{A}:=\{U_i\}_{i=1}^{\infty}$ with 
$$a:=\sum_{i=1}^{\infty}\omega_n 2^{-n}d(U_i)^n < \e.$$
Thus $\Hm{N}_{\delta}(\partial M_T)\leq b+a < 2\e$. Letting $\delta , \e \rightarrow 0$ we deduce 
$$\Hm{n}(\partial M_T)=0.$$
\end{proof}

\section{The $H_{\Sigma}\leq 0$ case}
In this final section we consider the necessity of the assumption that $H_{\Sigma} > 0$. We have mentioned that examples exist showing that $(\ref{mainresulteqn})$ does not hold in general without a similar assumption to $H_{\Sigma}> 0$. We now go further, and show that within the set of Neumann free boundary support surfaces not satisfying $H_{\Sigma}> 0$, $\calS$, the set of surfaces for which $(\ref{mainresulteqn})$ does not hold in general is dense in $\calS$. This shows that $H_{\Sigma}> 0$ is an appropriate condition to place on the flows. Of course, by taking minute copies of the already existing examples and gluing them onto a given support surface, again allows for cases where ($\ref{base}$) will fail while staying near the original support surface with respect to the Hausdorff metric. We therefore consider metrics ensuring that the support surfaces have greater similarity of structure in order to be considered close. That is, density is taken with respect to a metric based on the norms of the homeomorphisms between the support surfaces defined below.

\begin{Definition}\label{C3}
Let $\cl{S}$ denote the set of all Neumann free boundary support surfaces in $\R^{n+1}$. For $\Sigma \in \cl{S}$, let $\calN(\Sigma)$ denote the set of all mean curvature flows with Neumann free boundary conditions on the solid support surface $\Sigma$, $\cl{M}:=(M_t)_{t\in [0,T)}$, for which
$$\Hm{n}(sing_T\cl{M}\cap \Sigma)>0.$$
Define
$$\calS:=\cl{S}\sim\{\Sigma \in \cl{S}:H_{\Sigma}(x)> 0 \hbox{ for all }x\in \Sigma\} \hbox{ and}$$
$$\calS_0:=\{\Sigma \in \calS:\calN(\Sigma)\not=\emptyset\}.$$
Let $\Phi$ denote the set of $C^3$-diffeomorphisms
$$\{\phi:A\rightarrow B:A,B\subset \R^{n+1}\}.$$
Whenever $\phi_1,\phi_2\in \Phi$ satisfy $\phi_i:A\rightarrow B_i$ for some diffeomorphic manifolds, $B_1, B_2\subset \R^{n+1}$, diffeomorphic to some $A\subset \R^{n+1}$, define
$$||\phi_1-\phi_2||_k:=\sum_{0\leq |\alpha|\leq k}\sup_{x\in A}||D^{\alpha}\phi_1(x)-D^{\alpha}\phi_2(x)||$$
for $k\in \N$ and where $||\cdot||$ denotes the appropriate usual Euclidean distance. We define, otherwise, $||\phi_1-\phi_2||_k=\infty$.

Additionally, we define
$$||\phi_1-\phi_2||_{\cl{S}}:=||\phi_1-\phi_2||_1+\sup_{x\in A}|H_{\phi_1(A)}(\phi_1(x))-H_{\phi_2(A)}(\phi_2(x))|.$$

Finally, for $\Sigma\in \cl{S}$ and sets $\Sigma_1, \Sigma_2 \in \cl{S}$ diffeomorphic to $\Sigma$, define
$$d_{\Phi, \Sigma}(\Sigma_1,\Sigma_2):=\inf\{||\phi_1-\phi_2||_{\cl{S}}:\phi_i \in \Phi, \phi_i:\Sigma\rightarrow \Sigma_i, i\in \{1,2\}\}.$$
If $\Sigma_1$ or $\Sigma_2$ is not diffeomorphic to $\Sigma$, we define $d_{\Phi,\Sigma}(\Sigma_1, \Sigma_2)=\infty.$
\end{Definition}
\begin{Remark}
\begin{enumerate}[(1)]
\item We note that $d_{\Phi,\Sigma}(\cdot,\cdot)$ is a metric satisfying $d_{\Phi,\Sigma}\geq d_{\Hm{}}$, where $d_{\Hm{}}$ denotes the Hausdorff distance. $d_{\Phi,\Sigma}$ allows us to consider support surfaces that are close to a given support surface in 
a more natural sense than Hausdorff distance, with which more drastic changes to the geometry of the surface would be allowed. That is, with these metrics, gluing an extremely small but highly curved piece of surface to an otherwise nearly flat surface is still considered a large variation. In particular, gluing minute copies of the counterexample in \cite{koeller4} onto a given support surface will not, in general, result in a nearby surface.
\item $d_{\Phi,\Sigma}$ is bounded from above by the $C^k$ norms for diffeomorphisms for $k\geq 2$. This fact helps us obtain estimates for the distances between two support surfaces below.
\item In the case that there is an $M\subset \Sigma\subset \R^{n+1}$ with $A_1$ and $A_2$ diffeomorphic to $M$, we also write
$$d_{\Phi, \Sigma}(A_1,A_2):=\inf\{||\phi_1-\phi_2||_{\cl{S}}:,\phi_i \in \Phi, \phi_i:M\rightarrow A_i, i\in \{1,2\}\}.$$
\item The inclusion of the mean curvature in the norm is important, as it is sets whose definition is based on the curvature of the elements that is being observed. Without this element it would also be true that $\calS_0$ is dense in $\cl{S}$.
\end{enumerate}
\end{Remark}
For notational convenience in our theorem, we make the following nomenclaturial definition.
\begin{Definition}\label{graph}
Let $X$ be an affine plane in $\R^n$, $f: X\rightarrow X^{\perp}$ be a function, and $(x,y)$ denote $x+y$ for $x\in X$ and $y \in X^{\perp}$. We write $gf$ to denote the graph function of $f$. That is, for $x\in X$
$$gf(x):=(x,f(x)).$$
\end{Definition}
\begin{Theorem}\label{HSigpositive}
For any $\e>0$ and any $\Sigma \in \calS$, there exists a $\Sigma_0 \in \calS_0$ such that 
$$d_{\Phi,\Sigma}(\Sigma,\Sigma_0)<\e.$$
\end{Theorem}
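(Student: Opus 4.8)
The plan is to start from an arbitrary $\Sigma \in \calS$ — so $\Sigma$ has at least one point $p$ where $H_\Sigma(p) \leq 0$ — and to modify $\Sigma$ only inside a tiny neighbourhood of $p$, producing a surface $\Sigma_0$ that is $d_{\Phi,\Sigma}$-close to $\Sigma$ and on which a counterexample flow in the sense of \cite{koeller4} can be supported, so that $\calN(\Sigma_0) \neq \emptyset$ and hence $\Sigma_0 \in \calS_0$. First I would localise: near $p$, after a rigid motion, write $\Sigma$ as a graph $gf$ over its tangent plane $X = T_p\Sigma \cong \R^n$, with $f(0)=0$, $Df(0)=0$, and $\mathrm{Hess}\, f(0)$ the second fundamental form of $\Sigma$ at $p$; the hypothesis $H_\Sigma(p)\leq 0$ says $\mathrm{tr}\,\mathrm{Hess}\, f(0) \leq 0$, i.e. the mean curvature of the graph at the origin is nonpositive for the chosen normal. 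The counterexample of Construction 3.1 in \cite{koeller4} is built on a model piece of support surface — essentially a suitably flattened or "non-mean-convex" patch on which one can run a flow whose interior collides with the patch on a set of positive $\Hm{n}$-measure. I would isolate that model patch $\Sigma^*$, which lives in a ball $B_{r}(0)$ and agrees to first order with a hyperplane at $\partial B_r$, together with its associated flow $\cl{M}^*$ realising $\Hm{n}(sing_{T}\cl{M}^*\cap \Sigma^*)>0$.

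Next I would graft a rescaled copy of this model onto $\Sigma$. Since the metric $d_{\Phi,\Sigma}$ is bounded above by the $C^2$ distance of the defining diffeomorphisms (Remark after Definition \ref{C3}(2)), and since a parabolic/affine rescaling $x \mapsto \mu x$ sends a patch with curvature $O(1)$ to one with curvature $O(\mu^{-1})$ but $C^1$-size $O(\mu)$, the key quantitative point is: for small $\mu$, a $\mu$-scaled copy of $\Sigma^*$ differs from a flat disc by $O(\mu)$ in $C^1$ and by $O(\mu)$ in the sup of $|H|$ once we also rescale. Concretely, I would fix a cutoff and interpolate between $\Sigma$ (outside $B_{2\mu r}(p)$) and the affinely placed scaled model (inside $B_{\mu r}(p)$), adjusting so the glued surface still satisfies the rolling-ball and curvature bounds of Definition \ref{freebdrysupport} with some (possibly larger) $\kappa_{\Sigma_0}$ — this is allowed since Definition \ref{C3} imposes no uniform curvature bound on the elements of $\calS$. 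One must check that the graft does not accidentally make $H_{\Sigma_0}>0$ everywhere; but since the model patch is by construction non-mean-convex at its centre, $\Sigma_0 \in \calS$ automatically. Estimating $\|\phi_1 - \phi_2\|_{\cl{S}} = \|\phi_1-\phi_2\|_1 + \sup|H_{\phi_1(A)}-H_{\phi_2(A)}|$ for the identity map versus the grafting diffeomorphism, one gets a bound $C\mu$, and choosing $\mu < \e/C$ gives $d_{\Phi,\Sigma}(\Sigma,\Sigma_0) < \e$.

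Finally I would transplant the flow. The model flow $\cl{M}^*$ supported on $\Sigma^*$, rescaled by $\mu$, is a $MCF(N,S)$ on the scaled model patch (parabolic rescaling preserves $MCF(N,S)$, as noted in the Remark after Definition \ref{paradef}); since the grafted region of $\Sigma_0$ coincides exactly with this scaled patch inside $B_{\mu r}(p)$, and the colliding flow stays inside that region up to its first singular time, $\cl{M}^*$ rescaled is a valid $MCF(N,S)$ on $\Sigma_0$ with $\Hm{n}(sing_T \cap \Sigma_0)>0$. Hence $\calN(\Sigma_0)\neq\emptyset$ and $\Sigma_0 \in \calS_0$. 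The main obstacle I anticipate is the gluing estimate: one must interpolate between $\Sigma$ and the scaled counterexample patch in such a way that (a) the resulting surface is genuinely $C^3$ and satisfies a rolling-ball condition (the interpolation region is where curvature is worst and where a naive cutoff could destroy embeddedness), and (b) the counterexample flow still "fits" — i.e. the collision set of the transplanted flow really does land on the grafted non-mean-convex part and does not interact with the interpolation collar. Handling (b) cleanly probably requires choosing the model patch $\Sigma^*$ from \cite{koeller4} to be flat outside a slightly smaller ball than its support, giving a buffer annulus in which to perform the $C^3$ interpolation without touching either the flat exterior or the active region of the flow.
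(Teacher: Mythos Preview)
Your proposal has a genuine gap in the scaling argument, and the paper's remark following Definition~\ref{C3} anticipates exactly this: ``gluing minute copies of the counterexample in \cite{koeller4} onto a given support surface will not, in general, result in a nearby surface.'' The issue is the mean curvature term in the metric $d_{\Phi,\Sigma}$. Under the rescaling $x\mapsto \mu x$ that you propose, the mean curvature of the grafted patch becomes $\mu^{-1}H_{\Sigma^*}$, which blows up as $\mu\to 0$. Since $\|\cdot\|_{\calS}$ contains the term $\sup_x|H_{\Sigma_0}(\phi_2(x))-H_\Sigma(\phi_1(x))|$, and $H_\Sigma$ stays bounded near $p$, the $d_{\Phi,\Sigma}$-distance between $\Sigma$ and your $\Sigma_0$ is of order $\mu^{-1}$, not $C\mu$. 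Your sentence ``by $O(\mu)$ in the sup of $|H|$ once we also rescale'' is precisely where the argument breaks: there is no further rescaling that fixes this while keeping the patch small. (The $C^1$ claim is also off: the gradient of the graph function is scale-invariant, so the $\|\cdot\|_1$ contribution from the graft is $O(1)$, not $O(\mu)$.)

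The paper avoids this by \emph{not} rescaling a fixed counterexample but by building a new one adapted to the local geometry of $\Sigma$. First it perturbs $\Sigma$ to $\Sigma_1$ so that, near the chosen point, $\Sigma_1$ coincides with a sphere of the \emph{same} mean curvature $H_\Sigma(0)<0$; since sphere and $\Sigma$ agree at $0$ to first order and in mean curvature, this interpolation is $\e/2$-small in $\|\cdot\|_{\calS}$. Then it constructs, by hand, a rotationally symmetric mean-convex initial surface $M_0$ lying a distance $\delta_0$ above this spherical cap and meeting $\Sigma_1$ orthogonally, runs the flow for a short time $T_0$, and finally replaces the spherical piece of $\Sigma_1$ by the graph of $M_{T_0}$ to obtain $\Sigma_0$. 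Since $M_{T_0}$ stays $C^3$-close to the sphere (hence to $\Sigma_1$), this second modification is again small in $\|\cdot\|_{\calS}$. On $\Sigma_0$ the same flow now collides with the support surface on a disc of positive $\Hm{n}$-measure at time $T_0$, giving $\Sigma_0\in\calS_0$. The essential idea you are missing is that closeness in $d_{\Phi,\Sigma}$ forces the counterexample patch to have curvature \emph{comparable} to that of $\Sigma$ at the graft point, so the construction must be tailored to $H_\Sigma(p)$ rather than imported at a different scale.
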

\begin{proof}
Let $\Sigma$ satisfy the rolling ball condition for balls of radius $R_0>0$ for which $H_{\Sigma}(\tilde{x})\leq 0$ for some $\tilde{x}\in \Sigma$. By otherwise replacing $\Sigma$ with a very small variation ($<<\e$ with respect to $d_{\Phi, \Sigma}$) near $\tilde{x}$, by the same method as that described below, we may assume that $H_{\Sigma}(x)<0$ for some $x\in \Sigma$ near $\tilde{x}$. Moreover, by otherwise rotating and translating $\Sigma$, we can assume $x=0$ and $T_x\Sigma=\R^n$. Furthermore, on some small open ball,$B_{\rho_0}(0)\subset U \subset\R^n$, $\rho_0<<R_0$, we can write 
$$\Sigma=g \psi_{\Sigma} \ \ \hbox{ and } \ \ B_{\Sigma}^0=g \psi_B,$$
where $B_{\Sigma}^0:=B_{h}(0-(h e_{n+1}))$, $h:=(n|H_{\Sigma}(0)|)^{-1}$, and $\psi_{\Sigma}, \psi_B:\R^n\rightarrow \R$ are $C^3$-functions. We see that $\psi_{\Sigma}(0)=\psi_B(0)$, $D\psi_{\Sigma}(0)=D\psi_B(0)$, and $$H_{\Sigma}((g\psi_{\Sigma})(0))=H_{\Sigma}(0)=H_{B_{\Sigma}^0}(0)=H_{B_{\Sigma}^0}((g\psi_B)(0)).$$
As $\Sigma$ is a locally compact $C^3$-surface satisfying a rolling ball condition, it follows that for some small $\rho_2<\rho_1<\rho_0$, there is an $\eta\in C^3(B_{\rho_0}^n(0),\R)$ such that 
\begin{equation}
\eta=
\begin{cases}
\psi_B \ \ \hbox{ on } \ \ B_{\rho_2}^n(0) \cr
\psi_{\Sigma} \ \ \hbox{ on } \ \ B_{\rho_0}^n(0)\sim B_{\rho_1}^n(0)
\end{cases}
\end{equation}
and $$||\eta-\psi_{\Sigma}||_{\cl{S}}<\frac{\e}{2}.$$
Set 
$$\Sigma_1:=(\Sigma \sim g\psi_{\Sigma}|_{B_{\rho_0}(0)})\cup g \eta |_{B_{\rho_0}(0)}.$$
By the rolling ball condition on $\Sigma$ and the fact that $\rho_0<<R_0$, we can also choose the $\eta$ above in such a way that $\Sigma_1$ is a smooth hypersurface satisfying the rolling ball condition for balls of radius $0<R_1\leq R_0$. It follows that $\Sigma_1 \in \calS$ and by selecting $\phi_1$ to be the identity transformation and $\phi_2(x):=\eta(\pi_{\R^n}(x))$ in Definition $\ref{C3}$, it can be calculated that
\begin{equation}\label{absch1}
d_{\Phi, \Sigma}(\Sigma,\Sigma_1)\leq ||\phi_1-\phi_2||_{\cl{S}}\leq \e/2.
\end{equation}

Furthermore, $g \eta|_{B_{\rho_2}(0)} \subset \Sigma_1$ is a rotationally symmetric, smooth hypersurface with constant mean curvature.

Take now 
$$\rho_3<<\min\{\rho_2 , h , R_1\}$$ and consider $\psi_B^1:\R\rightarrow \R$ defined by 
$$\psi_B^1(x):=(h^2-x^2)^{1/2}-h$$
on $[-\rho_3,\rho_3]$, a piece of circle of identical radius, $h$, to $B_{\Sigma}^0(0)$. 

Let $B_1:=g\psi_B^1|_{[-\rho_3,\rho_3]}$, $x_i:=\psi_B^1((-1)^i\rho_3)$, $\theta_i:G(1,2)\rightarrow G(1,2)$ be a  rotation taking $\R$ to $T_{\psi^1_B(x_i)}B_1$ satisfying $\la \theta_i(e_{n+1}),e_{n+1}\ra \geq 0$, and let $\delta_0<<\rho_3$.
For $\delta>0$, define
$$y_{\delta}^1(z):=(\delta^2-(z-\delta)^2)^{1/2}, z\in [0,\delta]$$
and
$$y_{\delta}^2(z):=(\delta^2-(z+\delta)^2)^{1/2}, z\in [-\delta,0].$$
For $\delta<\delta_0$ and $a_1<\delta$ define 
$$Q_1:=\theta_1(y_{\delta}^1([0,\delta]))+x_1,$$
$$Q_2:=\theta_2(y_{\delta}^2([-\delta,0]))+x_2,$$
$$Q_1^{a_1}:=\theta_1(y_{\delta}^1([a_1,\delta]))+x_1, \ \ \hbox{ and}$$
$$Q_2^{a_1}:=\theta_2(y_{\delta}^2([-\delta,-a_1]))+x_2.$$
$Q_1$ and $Q_2$ are smooth hypersurfaces in $\R^2$ satisfying 
$$Q_i\subset\{(x,y)\in\R^2:|x|\leq \rho_2,y\geq \psi_B^1(x)\}$$
$$Q_i\cap \Sigma_1=x_i, \hbox{ and}$$
$$\la \nu_{Q_i}(x_i),\nu_{\Sigma_1}(x_i)\ra =0 \ \ i\in \{1,2\},$$
where $\nu_{Q_i}$ is the unit normal for $Q_i$ satisfying $\la v_{Q_i},e_{n+1}\ra >0$.

Furthermore, we can choose $a_1, a_2$, and $\delta$ with $a_1<\delta<\delta_0$, $a_2<a_1$, and smooth functions, $\phi_1, \phi_2 \in C^3(\R,\R)$, so that $$Q_1^{a_1}=g\phi_1([-\rho_3,a_2-\rho_3]), Q_2^{a_1}=g\phi_2([\rho_3-a_2,\rho_3]),$$
$$\la \nu_{Q_i}(g\phi_i((-1)^i\rho_3)), e_{n+1}\ra < \la\nu_{\Sigma}(g\psi_B^1((-1)^i\rho_3)),e_{n+1}\ra, $$
$$\la\nu_{\Sigma}(g\psi_B^1((-1)^i\rho_3)),e_{n+1}\ra < \la\nu_{\Sigma}\psi_B^1((-1)^i(\rho_3-a_2)),e_{n+1}\ra, $$
$$\phi_i((-1)^i\rho_3)<\psi_B^1(-\rho_3)+\delta_0= \psi_B^1(\rho_3)+\delta_0, \hbox{ and}$$
$$\phi_i((-1)^i(\rho_3-a_2))=\psi_B^1((-1)^i(\rho_3-a_2)) < \psi_B^1(x)+\delta_0$$
for all $x\in (a_2-\rho_3, \rho_3-a_2)$.

By construction, we also have $H_{Q_i}(x)=-\delta^{-1}<0$ for all $x\in Q_i$, $i\in \{1,2\}$. We can now take a $\psi_{M}^1:\R\rightarrow \R$, $\psi_{M}^1\in C^3((-\rho_3,\rho_3))$, such that 
\begin{equation}
\psi_{M}^1(x)=
\begin{cases}
\eta((x,0,...,0))+\delta_0 \ \ & x\in (-3\rho_3/4,3\rho_3/4) \cr
\phi_2(x) \ \ & x\in (\rho_3-a_2,\rho_3) \cr
\phi_1(x) \ \ & x\in (-\rho_3,a_2-\rho_3)
\end{cases}
\end{equation}
and such that $D^2\psi_{M}^1<0$. Let $M_0^1:=Q_1\cup Q_2 \cup g\psi_{M}^1$ and define $\psi_{M}^n:\R^n\rightarrow \R$ by 
$$\psi_{\Sigma}^n(x):=\psi_{M}^1(|x|).$$
Define further 
$$M_0:=\{(x_n,x)\in \R^{n+1}:(|x_n|,x)\in M_0^1\},$$
a $C^3$-hypersurface in $\R^n$. By the construction we see that 
$$\partial M_0=M_0 \cap \Sigma_1,$$
$$\la \nu_{M_0}(x),\nu_{\Sigma_1}(x)\ra=0 \ \ x\in \partial M_0,$$
$$\pi_{\R^n}^{-1}(B_{7\rho_3/8}^n(0))\cap M_0=g \psi_{M}^n(B_{7\rho_3/8}^n(0)),$$
and $H_{M_0}(x)<0$ for all $x\in M_0$. Thus $M_0$ is a rotationally symmetric initial surface to mean curvature flow with Neumann free boundary conditions on the Neumann free boundary support surface $\Sigma_1$ with negative mean curvature.

We calculate that 
\begin{equation}\label{absch2}
d_{\Phi,\Sigma}(\pi_{\R^n}^{-1}(B_{3\rho_3/4}^n(0))\cap M_0, g \eta|_{B_{3\rho_3/4}^n(0)}) \leq ||\psi_{\Sigma}^n-\eta||_{C^3(B_{3\rho_3/4}^n(0))}=\delta_0.
\end{equation}
By $(\ref{absch2})$, and using the additional fact that $\la\nu_{M_0}(x),e_{n+1}\ra>C_{M_0}>0$ for all $x\in M_0 \cap B_{3\rho_3/4}$ we can find a $T_0>0$ such that there is a solution to mean curvature flow with Neumann free boundary conditions $\cl{M}:=(M_t)_{t\in [0,T_0]}$ with $T_0<T$ (where $T$ is the first singular time), for which
\begin{enumerate}[(i)]
\item $H_{M_t}(x)<0$ for all $x\in M_t$, $t\in [0,T]$,
\item There is a function $\phi:B_{\rho_3/2}^n(0)\times [0,T] \rightarrow \R \in C^3$ such that 
$$\phi(x,t)=\{y\in M_t:\pi_{\R^n}(y)=x\}, \ \ \hbox{ and}$$ 
\item $||\phi(\cdot,t)-\eta||_{C^3(B_{\rho_3/2}^n(0))}<2\delta.$
\end{enumerate}
Write $I:=\{(x_n,x)\in \R^{n+1}:|x_n|\in B_{\rho_3/2}^n(0), \eta(x_n)<x<\phi(x_n,T)\}$. By (ii) and the fact that $(M_t)$ is a mean curvature flow supported on $\Sigma_1$, we deduce that 
$$\bigcup_{t\in [0,T_0]}M_t\cap I=\emptyset.$$
By (iii) we may now take a $\psi_T\in C^3(B_{\rho_3/2}^n(0))$ defined by
\begin{equation}
\psi_T(x)=
\begin{cases}
\phi(x,T_0) \ \ x\in B_{\rho_3/4}^n(0) \cr
\eta(x) \ \ x\in B_{\rho_3/2}^n(0)\sim B_{3\rho_3/8}^n(0)
\end{cases}
\end{equation}
such that 
\begin{equation}\label{absch3}
||\psi_T-\eta||_{C^3(B_{\rho_3/2}^n(0))}\leq C(\delta_0,\delta_0/\rho_3)<\e/2
\end{equation}
for sufficiently small $\delta_0$.
Take now
$$\Sigma_0:=(\Sigma_1\sim g \eta|_{B_{\rho_3/2}^{n}(0)})\cup g \psi_T|_{B_{\rho_3/2}^n(0)}.$$
We deduce, from $(\ref{absch1})$ and $(\ref{absch3})$, that
\begin{eqnarray}
d_{\Phi,\Sigma}(\Sigma_0,\Sigma) & \leq & d_{\Phi,\Sigma}(\Sigma_0,\Sigma_1)+d_{\Phi,\Sigma}(\Sigma_1,\Sigma) \nonumber \\
& \leq & ||\phi_T-\eta||_{C^1(B_{\rho_3/2}^n(0))} + d_{\Phi,\Sigma}(\Sigma_1,\Sigma) < \e. \nonumber
\end{eqnarray}
By (i), and since $\partial \cl{M}\cap (\eta(B_{\rho_3/2}^n(0)) \cup I =\emptyset$, $(\cl{M})_{t\in [0,T_0)}$ is also a solution to mean curvature flow with Neumann free boundary conditions supported on $\Sigma_0$. Using $\Sigma_0$ as the support surface, we relabel the flow $\cl{M}_0$. In $\cl{M}_0$, $M_{T_0}\cap \Sigma_0 \not= \partial M_{T_0}$, and therefore, $T_0$ is the first singular time for this flow. As 
$$sing_{T_0}\cl{M}_0 \supset M_{T_0}\cap \Sigma_0 \supset \psi_T(B_{\rho_3/4}^n(0)),$$
 we deduce that 
$$\Hm{n}(sing_{T_0}\cl{M}_0)\geq \Hm{n}(B_{\rho_3/2}^n(0))>0$$
and thus that $\Sigma_0\in \calS_0$.
\end{proof}

\begin{bibdiv}
\begin{biblist}

\bib{brakke}{book}{
title={The Motion of a Surface by its Mean Curvature},
author={Brakke, K.},
date={1978},
publisher={Princeton Univ. Press}
}
\bib{buckland}{article}{
title={Mean curvature flow with free boundary on smooth hypersurfaces},
author={Buckland, J.A.},
journal={J. Reine Angew. Math.},
volume={586},
date={2005},
pages={71--90}
}
\bib{ecker1}{book}{
title={Regularity Theory for Mean Curvature Flow},
author={Ecker, K.},
date={2004},
publisher={Birkh\"auser},
}
\bib{federer}{book}{
title={Geomteric Measure Theory},
author={Federer, H.},
date={1969},
publisher={Springer-Verlag},
address={Berlin-Heidelberg-New York}
}
\bib{giltru}{book}{
title={Elliptic Partial Differential Equations of Second Order},
author={Gilbarg, D.},
author={Trudinger, N.S.},
edition={2},
date={1998},
publisher={Springer-Verlag}
}
\bib{huisken}{article}{
title={Asymptotic Behaviour for Singularities of the Mean Curvature Flow},
author={Huisken, G.},
journal={J. Diff. Geom.},
volume={31},
date={1990},
pages={285--299}
}
\bib{koeller4}{article}{

title={Regularity of mean curvature flows with Neumann free boundary conditions},
author={Koeller, A. N.},
journal={to appear in Calc. Var. Partial Differential Equations},
}
\bib{simon1}{book}{
title={Lectures on Geometric Measure Theory},
author={Simon, L.},
series={Proceedings of the centre for Mathematical Analysis},
volume={3},
date={1983},
publisher={ANU},
address={Canberra}
}
\bib{stahl1}{article}{
title={Regularity Estimates for Solutions to the Mean Curvature Flow with a Neumann Free Boundary Condition},
author={Stahl, A.},
journal={Calc. Var. Partial Differential Equations},
volume={4},
number={4},
date={1996},
pages={385--407}
}
\bib{stahl}{article}{
title={Convergence of Solutions to the Mean Curvature Flow with a Neumann Boundary Condition},
author={Stahl, A.},
journal={Calc. Var. Partial Differential Equations},
volume={4},
number={5},
date={1996},
pages={421-441}
}
\bib{stone}{book}{
title={Singular and Boundary Behaviour in the Mean Curvature Flow of Hypersurfaces},
author={Stone, A.},
publisher={PhD Thesis, Stanford University},
year={1994},
}
\end{biblist}
\end{bibdiv}


\end{document}